\let\ORIlabel\label
\let\ORIrefstepcounter\refstepcounter
\AddToHook{package/hyperref/before}{%
  \let\label\ORIlabel
  \let\refstepcounter\ORIrefstepcounter
}
\documentclass[hidelinks,onefignum,onetabnum]{siamart220329}

\usepackage[normalem]{ulem}
\usepackage{amsfonts}
\usepackage{graphicx}
\usepackage[caption=false]{subfig} 
\usepackage{xcolor} 
\usepackage{algorithm,algorithmic} 
\usepackage{booktabs} 
\usepackage{placeins}
\usepackage{multirow}

\newcommand{\bx}{\mathbf{x}}
\newsiamremark{remark}{Remark}
\newsiamremark{example}{Example}
\newsiamremark{hypothesis}{Hypothesis}
\newsiamremark{assumption}{Assumption}
\crefname{hypothesis}{Hypothesis}{Hypotheses}
\newsiamthm{claim}{Claim}

\headers{Second-order flows via scalar auxiliary variables}{H.~Chen, W.~Liu, Z.~Xie, and Y.~Yuan}

\title{Second-Order Flows for Approaching Stationary Points via the SAV-E Approach\thanks{\textbf{Funding:} The work of H.~Chen, Z.~Xie, and Y.~Yuan was supported by the
National Natural Science Foundation of China (NSFC) under Grant
No.~12171148. Z.~Xie was also supported by the Foundation Sciences of Hunan Province (Grant No.~2024JC0006). H.~Chen was
partially supported by the Hubei Province Postdoctoral Talent
Introduction Project (No.~2024HBBHJD061) and the China Postdoctoral
Science Foundation (No.~2025M773049). The work of H.~Chen and W.~Liu
was supported by the NSFC under Grant No.~12571448. W.~Liu was also
supported by the Innovation Research Foundation of NUDT
(No.~202402-YJRC-XX-002).}}

\author{Haifan Chen\thanks{Institute for Math \& AI, Wuhan University, Wuhan 430072, China; 
  and MOE-LCSM, School of Mathematics and Statistics, Hunan Normal University, Changsha 410081, China
  (\email{haifan.chen@whu.edu.cn}).}
  \and Wei Liu\thanks{College of Science, National University of Defense Technology, Changsha 410073, China 
  (\email{wl@nudt.edu.cn}).}
\and Ziqing Xie\thanks{Corresponding author. MOE-LCSM, School of Mathematics and Statistics, Hunan Normal University, Changsha 410081, China
  (\email{ziqingxie@hunnu.edu.cn}).}
\and Yifang Yuan\thanks{MOE-LCSM, School of Mathematics and Statistics, Hunan Normal University, Changsha 410081, China
  (\email{19848082195@163.com}).}}

\usepackage{hyperref} 

\ifpdf
\hypersetup{
  pdftitle={Second-Order Flows for Approaching Stationary Points via the SAV-E Approach},
  pdfauthor={H. Chen, W. Liu, Z. Xie, and Y. Yuan}
}
\fi

\begin{document}

\maketitle

\begin{abstract}
Second-order flows represent an emerging computational framework for approaching stationary points of variational energies, characterized by damped hyperbolic partial differential equations. In this paper, we develop a scalar auxiliary variable (SAV) method for this class of flows, referred to as SAV-E. The resulting scheme is linear and decoupled, and we prove unconditional modified-energy stability. We further establish subsequential convergence to stationary points for the Ginzburg--Landau and conserved Swift--Hohenberg models under suitable assumptions. Numerical experiments support the analysis and demonstrate robustness and efficiency compared with representative baselines and the corresponding gradient flows.
\end{abstract}

\begin{keywords}
variational problem,
second-order flow, 
scalar auxiliary variable, 
stationary point,
Ginzburg--Landau energy, 
conserved Swift--Hohenberg model
\end{keywords}

\begin{MSCcodes}
35Q90, 35L71, 65M12, 65K10 
\end{MSCcodes}

\section{Introduction}\label{sect:intro}
Many fundamental phenomena in science and engineering are governed by variational principles, with stable equilibrium states characterized as minimizers of an associated free energy functional. The search for these minimizers, known as the variational minimization problem, is a central task with applications across diverse fields such as quantum physics, image processing, cell biology, and materials science \cite{BaoCai13, Can70, ChaVes01, GenPro93, Fetter2009RMP}. Mathematically, the objective is to solve
\begin{equation}\label{eq:min_problem}
\min_{u \in X} E(u),
\end{equation}
where $X$ is a suitable Sobolev space. This paper focuses on a broad class of energies composed of a quadratic term and a nonlinear potential, which generally take the form
\begin{equation}\label{eq:energy_functional}
E(u)=\frac12\langle\mathcal{L} u,u\rangle+\int_{\Omega}F(u)d\mathbf{x}.
\end{equation}
Such problems can pose significant computational challenges due to nonlinearity and even non-convexity. Here, $\Omega$ is the spatial domain, $\mathcal{L}$ is a symmetric non-negative linear operator, and $F$ is a nonlinear function, whose derivative is denoted by $f(u) := F'(u)$. Throughout this paper, $\langle \cdot, \cdot \rangle$ denotes the $L^2$ inner product on $\Omega$.

A classical approach to solving such variational problems is the gradient flow method \cite{CaiSheXu17, DuLiuWan04, DuLiuWan06, WanZhaZha21, XuJiaZha13}, which evolves \(u(t)\) via the dissipative dynamics
\begin{equation}
\label{eq:gradient-flow}
\partial_t u = -\mathcal{G} \mu,
\end{equation}
where \(\mu:=\delta E/\delta u=\mathcal Lu+f(u)\) is the chemical potential, and \(\mathcal{G}\) is a symmetric non-negative operator (e.g., \(I\) for the \(L^2\)-flow or \(-\Delta\) for the \(H^{-1}\)-flow). These methods mimic steepest descent in the energy landscape, ensuring energy dissipation and requiring only first-order variational information. However, they often suffer from slow convergence and trapping in local minima or saddle points. Separately, optimization-based approaches have also been developed for directly minimizing phase-field and Landau-type free-energy functionals \cite{BaoCheJia22,BaoCheJiaQiu24,JiaSiCheBao20,ZhoHuaJiaZha24}.

Second-order flows have recently emerged as a promising alternative to
gradient-flow methods \cite{CheDonIglLiuXie25,CheDonLiuXie23, DonHinZha21}. Motivated by the established connection between Nesterov's accelerated gradient method and second-order ODEs \cite{SuBoyCan16}, the second-order flow serves as a novel computational model for variational problems, introducing inertia to yield damped hyperbolic PDEs of the form
\begin{equation}
\label{eq:second-order-flow}
\partial_{tt} u + \eta(t) \partial_t u = -\mathcal{G} \mu,
\end{equation}
with damping \(\eta(t) > 0\) (e.g., constant or \(\alpha/t\), \(\alpha \geq 3\)). Recent studies have demonstrated the efficiency and robustness of second-order flows for challenging problems, from computing ground states of Bose-Einstein condensates \cite{CheDonLiuXie23} to finding stable liquid crystal configurations \cite{CheDonIglLiuXie25}. Some theoretical foundations for second-order flows have also been established via convex-splitting schemes, including well-posedness of the continuous flow and the subsequential convergence of iterative schemes to a stationary point \cite{CheDonIglLiuXie25}.

A key challenge in designing numerical discretizations for second-order flows is to balance computational efficiency and energy stability in the treatment of nonlinear terms. Our recent work has examined this trade-off through two approaches. In \cite{CheDonLiuXie23}, we developed a stabilized semi-implicit scheme that achieves high computational efficiency; however, its unconditional energy stability lacks a theoretical guarantee. In \cite{CheDonIglLiuXie25}, we studied a convex-splitting approach, which ensures unconditional stability and convergence but at the expense of solving a nonlinear system per iteration. This persistent trade-off motivates the development of new algorithms specifically designed for second-order flows that are linear, efficient, and energy-stable, with rigorous stationary-point convergence guarantees.

This paper addresses this challenge by developing a novel algorithm based on the scalar auxiliary variable (SAV) method, a powerful tool for constructing linear, unconditionally stable schemes~\cite{SheXuYan18,SheXuYan19}. SAV has been widely used to discretize gradient flows and related dissipative evolution equations~\cite{CheWan21,li2022efficient, QiaSunZhaZha19,WanHuaWan21,Yan21}. It has also been applied to energy minimization via imaginary-time gradient flows~\cite{WanZhoHu23,ZhaShe19} and to saddle-point search~\cite{GuWanZha24}. However, the classical SAV formulation, referred to here as SAV-F, may converge to a state that does not satisfy the original Euler--Lagrange equation. In~\cite{LiuSheZha23}, a minimization-oriented SAV formulation based on the full energy $E(u)$, referred to here as SAV-E, was introduced for finite-dimensional optimization. Furthermore,~\cite{LiuSheZha23} coupled this SAV-E formulation with a relaxation step and an adaptive time-stepping strategy, demonstrating empirically faster convergence than many gradient-based methods in finite-dimensional settings. It is worthwhile to point out that SAV-E is distinct from other full-energy SAV variants~\cite{HuaShe22,HuaSheYan20}, which are mainly designed for accurate approximation of gradient-flow dynamics.

Building on \cite{LiuSheZha23}, we extend the SAV-E strategy to the second-order flow framework in infinite-dimensional settings and develop the SF-SAV-E algorithm for approaching stationary points of variational energies. We incorporate a relaxation step to control the decay of the auxiliary variable and an adaptive time-stepping strategy to improve computational efficiency. The resulting relaxed scheme admits a linear and decoupled implementation, and we establish its unconditional modified-energy stability. Under suitable assumptions, we establish subsequential convergence to stationary points for the Ginzburg--Landau (GL) and conserved Swift--Hohenberg (CSH) models. Numerical experiments demonstrate the robustness and efficiency of the proposed method and provide supporting evidence for the key assumption used in the convergence analysis.

The remainder of this paper is organized as follows. In Section~\ref{sect:2}, 
we develop the SF-SAV-E scheme with relaxation and adaptive time stepping strategies. Section~\ref{sect:conditional-convergence} presents the subsequential convergence analysis for the GL and CSH models. Section~\ref{sect:numerics} reports numerical results, and Section~\ref{sect:conclusion} offers concluding remarks.

\section{SAV-based algorithm for second-order flows}\label{sect:2}
In this section, we construct an adaptive relaxed SF-SAV-E scheme for the second-order flow \eqref{eq:second-order-flow}, derive a linear and decoupled implementation, and establish its modified-energy stability.

\subsection{Continuous second-order flow and energy dissipation law}
We start by analyzing the energy dissipation properties of the continuous second-order flow model \eqref{eq:second-order-flow}, which will serve as a benchmark for constructing our energy-stable numerical scheme. Introducing the velocity \( v = \partial_t u \), the model can be written as the first-order system:
\begin{subequations}\label{eq:2nd_flow_system}
\begin{align}
    \partial_t u &= v, \label{eq:2nd_flow_u} \\
    \partial_t v &= -\eta(t)v - \mathcal{G}\mu, \label{eq:2nd_flow_v}
\end{align}
\end{subequations}
where \(\mu:=\delta E/\delta u=\mathcal Lu+f(u)\). Let $H\subset L^2(\Omega)$ be a closed subspace. We assume that $\mathcal G$ is symmetric and non-negative on its domain $\operatorname{dom}(\mathcal G)\subset L^2(\Omega)$ and that
\[
\mathcal G:\operatorname{dom}(\mathcal G)\cap H\to H
\]
is positive definite and invertible. We denote the corresponding inverse by $\mathcal G^{-1}$ and define
\[
\|w\|_{\mathcal G^{-1}}^2:=\langle w,\mathcal G^{-1}w\rangle,
\qquad w\in H.
\]
For this induced norm, we use the standard identity
\begin{equation}\label{eq:norm-identity}
    2\langle a-b,\mathcal G^{-1}a\rangle=\|a\|_{\mathcal G^{-1}}^2-\|b\|_{\mathcal G^{-1}}^2+\|a-b\|_{\mathcal G^{-1}}^2.
\end{equation}

For the solution $(u, v)$ to the system \eqref{eq:2nd_flow_system}, we define the total energy as 
\begin{equation}\label{eq:total_energy}
    \mathcal{E}(u, v) = E(u) + \frac{1}{2} \|v\|_{\mathcal{G}^{-1}}^2.
\end{equation}
For sufficiently regular solutions of \eqref{eq:2nd_flow_system},
differentiating the total energy and testing \eqref{eq:2nd_flow_v} with $\mathcal{G}^{-1}v$ yield
\begin{equation}\label{eq:continuous-energy-law}
\frac{d}{dt}\mathcal E(u,v)
=
-\eta(t)\|v\|_{\mathcal G^{-1}}^2
\le 0.
\end{equation}

\subsection{The SAV-E formulation for second-order flows}\label{sect. 2.2}
We first recall the classical SAV formulation~\cite{SheXuYan18,SheXuYan19}. Assume that the nonlinear part of the energy $\int_{\Omega}F(u)\,d\mathbf{x}$ is bounded from below. Choose $C>0$ such that $\int_\Omega F(u)\,dx+C>0$ and introduce $r_F(t)=\sqrt{\int_\Omega F(u)\,dx+C}$.
The energy can then be rewritten as
\[
E(u)=\frac12\langle\mathcal{L}u,u\rangle+r_F^2-C.
\]
Using $\partial_tu=v$ and differentiating the definition of $r_F(t)$ with respect to time, the second-order flow \eqref{eq:2nd_flow_system} can be equivalently reformulated as
\begin{align}\label{eq:sav_f_continuous}
\left\{\begin{aligned}
    &\partial_t u = v, \\
    &\partial_t v = -\eta(t)v -  \mathcal{G}\left(\mathcal{L}u+ \frac{r_F}{\sqrt{\int_{\Omega} F(u) \, d\mathbf{x} + C}}f(u)\right), \\
    &\frac{d\,r_F}{dt} = \frac{1}{2} \left\langle \frac{f(u)}{\sqrt{\int_{\Omega} F(u) \, d\mathbf{x} + C}}, v \right\rangle.
\end{aligned}\right.
\end{align}
A straightforward first-order semi-implicit discretization leads to what we denote as the SF-SAV-F scheme:
\begin{align}\label{eq:sf-sav-f-scheme}
\left\{
\begin{aligned}
    &\frac{u^{n+1} - u^n}{\tau} = v^{n+1}, \\[3pt]
    &\frac{v^{n+1} - v^n}{\tau} = - \eta^{n+1} v^{n+1} -\mathcal{G}\left(\mathcal{L} u^{n+1} + \frac{r_F^{n+1}}{\sqrt{\int_{\Omega} F(u^n) \, d\mathbf{x} + C}} f(u^n)\right), \\[3pt]
    &\frac{r_F^{n+1} - r_F^n}{\tau} = \frac{1}{2} \left\langle \frac{f(u^n)}{\sqrt{\int_{\Omega} F(u^n) \, d\mathbf{x} + C}}, v^{n+1} \right\rangle.
\end{aligned}
\right.
\end{align}
When the numerical sequence converges to a steady state $(u^\star,v^\star,r_F^\star)$, the first equation of \eqref{eq:sf-sav-f-scheme} gives $v^\star=0$, and the second equation yields
\begin{equation}\label{eq:sav-f-stationary}
\mathcal{G}\left(
\mathcal{L}u^\star
+
\frac{r_F^\star}
{\sqrt{\int_\Omega F(u^\star)\,d\mathbf{x}+C}}
f(u^\star)
\right)=0.
\end{equation}
If $r_F^\star/\sqrt{\int_\Omega F(u^\star)\,d\mathbf{x}+C}\neq 1$, a steady state satisfying \eqref{eq:sav-f-stationary} need not satisfy the original Euler--Lagrange equation, which presents a limitation when SAV-F is used for variational minimization. This behavior is illustrated numerically in Example~\ref{eg:4-2}.

Following the minimization-oriented SAV strategy in~\cite{LiuSheZha23}, we instead assume that the full energy $E(u)$ is bounded from below. Choose $C>0$ sufficiently large such that $E(u)+C>0$ and introduce $r(t)=\sqrt{E(u)+C}$. Then \[
E(u)=r^2-C.
\] Using $\partial_tu=v$ and differentiating the definition of $r(t)$
with respect to time, the second-order flow \eqref{eq:2nd_flow_system} becomes
\begin{align}\label{eq:sav_e_continuous}
\left\{\begin{aligned}
    &\partial_t u = v, \\
    &\partial_t v = -\eta(t)v - \frac{r}{\sqrt{E(u) + C}}\, \mathcal{G}\left( {\mathcal{L} u + f\left(u\right)}\right), \\
    &\frac{dr}{dt} = \frac{1}{2\sqrt{E(u) + C}} \left\langle \mathcal{L} u + f(u), v \right\rangle.
\end{aligned}\right.
\end{align}
 For this system, we propose the following semi-implicit scheme, which includes a symmetric non-negative linear stabilization operator $\mathcal{S}$ to enhance stability:
\begin{align}\label{eq:sf-sav-e-scheme}
\left\{\begin{aligned}
    &\frac{u^{n+1} - u^n}{\tau} = v^{n+1}, \\
    &\frac{v^{n+1} - v^n}{\tau} = -\eta^{n+1} v^{n+1} - \mathcal{S} u^{n+1} -\frac{r^{n+1}}{\sqrt{E(u^n) + C}} \mathcal{G} \left( {\mathcal{L} u^n + f(u^n)}\right) + \mathcal{S} u^n, \\
    &\frac{r^{n+1} - r^n}{\tau} = \frac{1}{2\sqrt{E(u^n) + C}} \left\langle \mathcal{L} u^n + f(u^n), v^{n+1} \right\rangle.
\end{aligned}\right.
\end{align}
This scheme \eqref{eq:sf-sav-e-scheme} is linear and admits the decoupled implementation given in Section~\ref{subsec:relaxed-decoupled}. More importantly, if the three sequences $\{u^n\}$, $\{v^n\}$, and $\{r^n\}$ converge to $u^\star$, $v^\star$, and $r^\star$, respectively, the first equation of \eqref{eq:sf-sav-e-scheme} also gives $v^\star=0$, while the second equation yields
\begin{align}\label{eq:sav-e-stationary}
\frac{r^\star}{\sqrt{E(u^\star)+C}}\,
\mathcal{G}\bigl(\mathcal{L}u^\star+f(u^\star)\bigr)=0.
\end{align}
If $r^\star\neq0$, it follows that $\mathcal{G}\left(\mathcal{L}u^\star+f(u^\star)\right)=0$, which represents the stationary condition for the original variational problem. In particular, for the GL model with $\mathcal G=I$, this is the Euler--Lagrange equation $\mathcal L u^\star+f(u^\star)=0$, whereas for the mass-conserved CSH model with $\mathcal G=-\Delta$, it corresponds to stationarity under the mass constraint.

For the analysis below, we assume that the symmetric stabilization operator $\mathcal S$ satisfies
\begin{equation}\label{eq:compatibility}
\langle \mathcal S w,\,\mathcal G^{-1}w\rangle \ge 0
\qquad \text{for all admissible } w.
\end{equation}
A standard choice in practice is $\mathcal S=\mathcal G\mathcal L$, for which \eqref{eq:compatibility} reduces to \(\langle \mathcal L w,w\rangle\ge0\), consistent with the positive semi-definiteness of \(\mathcal L\).

\subsection{Relaxed SF-SAV-E and decoupled implementation}\label{subsec:relaxed-decoupled}
As shown in Section~\ref{sect. 2.2}, a limiting steady state of SAV-E with $r^\star\neq0$ satisfies the stationary condition for the original variational problem. In practice, however, the discrete auxiliary variable $r^n$ may deviate from $\sqrt{E(u^n)+C}$. Specifically, \(r^n\) may decay much faster than \(\sqrt{E(u^n)+C}\) and become very small before \(u^n\) approaches stationarity, which may cause the iteration to stagnate. To address this issue and enhance robustness, we incorporate a relaxation technique inspired by \cite{JiaZhaZha22,ZhaShe22,Zha21}. To accommodate the adaptive time-stepping strategy detailed in Section~\ref{subsec:adaptive}, we present the relaxed scheme in its general form with a variable time step \(\tau_{n+1}>0\):
\begin{equation}
\label{eq:relaxed-sav-scheme}
\left\{\begin{aligned}
& \frac{u^{n+1} - u^n}{\tau_{n+1}} = v^{n+1}, \\
& \frac{v^{n+1} - v^n}{\tau_{n+1}} = - \eta^{n+1} v^{n+1} - \mathcal{S} u^{n+1} -\frac{\tilde{r}^{n+1}}{\sqrt{E(u^n) + C}} \mathcal{G} \left( {\mathcal{L} u^n + f(u^n)}\right) + \mathcal{S} u^n, \\
& \frac{\tilde{r}^{n+1} - r^n}{\tau_{n+1}} = \frac{1}{2\sqrt{E(u^n) + C}} \left\langle \mathcal{L} u^n + f(u^n), v^{n+1} \right\rangle, \\
& r^{n+1} = \xi\, \tilde{r}^{n+1} + (1 - \xi) \sqrt{E(u^{n+1}) + C},
\end{aligned}\right.
\end{equation}
 where $\xi \in [0,1]$ is a relaxation parameter to be determined. The first three equations constitute the variable time-step formulation of the basic SF-SAV-E scheme \eqref{eq:sf-sav-e-scheme}, yielding an intermediate auxiliary variable $\tilde{r}^{n+1}$. The last equation for $r^{n+1}$ then adjusts this value via a convex combination with its theoretical value $\sqrt{E(u^{n+1}) + C}$. When \(\tau_{n+1}=\tau\) and \(\xi=1\), \eqref{eq:relaxed-sav-scheme} reduces to the basic SF-SAV-E scheme \eqref{eq:sf-sav-e-scheme}. The choice of \(\xi\) will be specified in Section \ref{subsec:relaxed-stability} so as to preserve the modified-energy dissipation property. We first derive a linear and decoupled implementation of the intermediate update.

Define
\[
q^n:=
\frac{\mathcal L u^n+f(u^n)}
{\sqrt{E(u^n)+C}},
\qquad
\mathcal A_{\tau,\eta}:=(1+\tau\eta)I+\tau^2\mathcal S .
\]
Eliminating $v^{n+1}$ from the first two equations of \eqref{eq:relaxed-sav-scheme} gives
\begin{equation}\label{eq:u_decomposition}
u^{n+1} = u_1^{n+1} + \tilde{r}^{n+1} u_2^{n+1},
\end{equation}
where $u_1^{n+1}$ and $u_2^{n+1}$ are the solutions of
\begin{equation}\label{eq:decoupled-subsystems}
\begin{aligned}
\mathcal{A}_{\tau_{n+1},\eta^{n+1}} u_1^{n+1} &= (1 + \tau_{n+1} \eta^{n+1}) u^n + \tau_{n+1} v^n + \tau_{n+1}^2 \mathcal{S} u^n, \\
\mathcal{A}_{\tau_{n+1},\eta^{n+1}} u_2^{n+1} &= -\tau_{n+1}^2 \mathcal{G} q^n.
\end{aligned}
\end{equation}
For the two representative models studied below, we take
$\mathcal S=-\Delta$ for the GL model and
$\mathcal S=-\Delta(\Delta+1)^2$ for the CSH model.
In both cases, $\mathcal A_{\tau,\eta}$ is an invertible
constant-coefficient linear operator for any $\tau,\eta>0$.
Hence the two subsystems in \eqref{eq:decoupled-subsystems}
admit unique solutions and can be solved efficiently using standard fast solvers, such as FFT-based methods under periodic boundary conditions. 

Substituting \eqref{eq:u_decomposition} together with
$v^{n+1}=(u^{n+1}-u^n)/\tau_{n+1}$ into the third equation of
\eqref{eq:relaxed-sav-scheme} gives
\begin{equation}\label{eq:r_scalar_relation}
\bigl(2-\langle q^n,u_2^{n+1}\rangle\bigr)\tilde r^{n+1}
=
2r^n+\langle q^n,u_1^{n+1}-u^n\rangle.
\end{equation}
Testing the second equation of \eqref{eq:decoupled-subsystems} with $\mathcal{G}^{-1}u_2^{n+1}$, we obtain
\[
-\tau_{n+1}^2\langle q^n,u_2^{n+1}\rangle=(1+\tau_{n+1}\eta^{n+1})\|u_2^{n+1}\|_{\mathcal{G}^{-1}}^2+\tau_{n+1}^2\langle\mathcal{S}u_2^{n+1},\mathcal{G}^{-1}u_2^{n+1}\rangle.
\]
By the compatibility condition \eqref{eq:compatibility}, the right-hand side is non-negative. Hence $\langle q^n,u_2^{n+1}\rangle\le0$, and therefore $2-\langle q^n,u_2^{n+1}\rangle\ge2>0$. Therefore,
\begin{equation}
\label{eq:r_update_formula}
\widetilde r^{n+1}
=
\frac{
2r^n+\langle q^n,u_1^{n+1}-u^n\rangle
}{
2-\langle q^n,u_2^{n+1}\rangle
},
\end{equation}
and $\widetilde r^{n+1}$ is uniquely determined. Thus, for any prescribed \(\xi\in[0,1]\), the relaxed one-step update is uniquely determined for the GL and CSH settings considered in this paper.

\subsection{Modified-energy stability and relaxation parameter selection}\label{subsec:relaxed-stability}
We first characterize the admissible choices of the relaxation parameter $\xi$ that preserve the modified-energy dissipation property. Define
\begin{equation}
\label{eq:xi-set}
\mathcal{V}_n = \left\{ \xi \in [0,1] \mid (r^{n+1})^2 - (\tilde{r}^{n+1})^2 - (\tilde{r}^{n+1} - r^n)^2 \leq \vartheta \mathcal{D}(v^{n+1}, v^n) \right\},
\end{equation}
where $\vartheta \in [0,1)$ is a user-defined parameter (set to 0.99 in our experiments), and
\begin{equation}\label{eq:mathcal-D}
\begin{aligned}
\mathcal{D}(v^{n+1}, v^{n})
:={}&\tau_{n+1}\,\eta^{n+1}\|v^{n+1}\|_{\mathcal{G}^{-1}}^2
+\tau_{n+1}^2
\langle \mathcal{S}v^{n+1}, \mathcal{G}^{-1}v^{n+1} \rangle \\
&\quad +\frac12 \|v^{n+1}-v^n\|_{\mathcal{G}^{-1}}^2 .
\end{aligned}
\end{equation}
By \(\eta^{n+1}>0\) and the compatibility condition \eqref{eq:compatibility}, we have \(\mathcal D(v^{n+1},v^n)\ge0\). Moreover, \(\mathcal V_n\) is nonempty since \(1\in\mathcal V_n\). Indeed, setting \(\xi=1\) gives \(r^{n+1}=\tilde r^{\,n+1}\), and hence \(-(\tilde r^{\,n+1}-r^n)^2\le \vartheta\mathcal D(v^{n+1},v^n)\).

The following theorem shows that any $\xi\in\mathcal V_n$
preserves the modified-energy dissipation property.
\begin{theorem}[Modified-energy stability]
\label{thm:energy-stability-relax}
For any $\tau_{n+1}>0$ and $\xi\in\mathcal V_n$, the relaxed scheme
\eqref{eq:relaxed-sav-scheme} satisfies
\begin{equation}
\label{eq:discrete_energy_law-relax}
\bar{\mathcal E}^{n+1}-\bar{\mathcal E}^{n}
\le
-(1-\vartheta)\mathcal D(v^{n+1},v^n)
\le 0,
\end{equation}
where
\[
\bar{\mathcal E}^{n}
:=
(r^n)^2+\frac12\|v^n\|_{\mathcal G^{-1}}^2.
\]
Moreover, the intermediate variables satisfy
\begin{equation}
\label{eq:intermediate-energy-estimate}
(\tilde r^{n+1})^2
+\frac12\|v^{n+1}\|_{\mathcal G^{-1}}^2
\le
\bar{\mathcal E}^{n}
\le
\bar{\mathcal E}^{0}.
\end{equation}
\end{theorem}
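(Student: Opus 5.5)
The plan is to derive a discrete energy identity for the unrelaxed intermediate update first, and then let the definition of $\mathcal V_n$ absorb the relaxation step. Write $\tau=\tau_{n+1}$ and $\eta=\eta^{n+1}$. We test the second equation of \eqref{eq:relaxed-sav-scheme} with $\tau\,\mathcal G^{-1}v^{n+1}$; this is admissible because $v^{n+1}$ lies in $H$, for instance by mass conservation in the CSH case.

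For the time-difference term, the identity \eqref{eq:norm-identity} gives
\[
\langle v^{n+1}-v^n,\mathcal G^{-1}v^{n+1}\rangle
=\tfrac12\|v^{n+1}\|_{\mathcal G^{-1}}^2-\tfrac12\|v^n\|_{\mathcal G^{-1}}^2+\tfrac12\|v^{n+1}-v^n\|_{\mathcal G^{-1}}^2 .
\]
The damping term produces $-\tau\eta\|v^{n+1}\|_{\mathcal G^{-1}}^2$. For the stabilization terms, the first equation gives $u^{n+1}-u^n=\tau v^{n+1}$, so
\[
-\tau\langle \mathcal S(u^{n+1}-u^n),\mathcal G^{-1}v^{n+1}\rangle=-\tau^2\langle\mathcal S v^{n+1},\mathcal G^{-1}v^{n+1}\rangle .
\]
For the nonlinear term, symmetry of $\mathcal G$ yields $\langle \mathcal G q^n,\mathcal G^{-1}v^{n+1}\rangle=\langle q^n,v^{n+1}\rangle$. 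The contribution is therefore $-\tau\tilde r^{n+1}\langle q^n,v^{n+1}\rangle$. By the third equation this equals $-2\tilde r^{n+1}(\tilde r^{n+1}-r^n)$, which we rewrite as
\[
-\bigl[(\tilde r^{n+1})^2-(r^n)^2+(\tilde r^{n+1}-r^n)^2\bigr].
\]
Collecting all the terms gives the key identity
\[
(\tilde r^{n+1})^2+\tfrac12\|v^{n+1}\|_{\mathcal G^{-1}}^2-\bar{\mathcal E}^n=-(\tilde r^{n+1}-r^n)^2-\mathcal D(v^{n+1},v^n).
\]

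Both terms on the right-hand side are nonpositive: $\mathcal D\ge0$ because $\eta>0$ and by \eqref{eq:compatibility}. This immediately gives the first inequality of \eqref{eq:intermediate-energy-estimate}. For the relaxed energy we add and subtract $(\tilde r^{n+1})^2$ and obtain
\[
\bar{\mathcal E}^{n+1}-\bar{\mathcal E}^n=\bigl[(r^{n+1})^2-(\tilde r^{n+1})^2-(\tilde r^{n+1}-r^n)^2\bigr]-\mathcal D(v^{n+1},v^n).
\]
The bracket is bounded by $\vartheta\mathcal D$ precisely because $\xi\in\mathcal V_n$ by \eqref{eq:xi-set}. This yields \eqref{eq:discrete_energy_law-relax}, whose right-hand side is nonpositive since $\vartheta<1$.

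The second inequality $\bar{\mathcal E}^n\le\bar{\mathcal E}^0$ in \eqref{eq:intermediate-energy-estimate} follows by induction from \eqref{eq:discrete_energy_law-relax}.

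The main point requiring care is the algebra in the nonlinear term. There the third equation must convert $\tau\langle q^n,v^{n+1}\rangle$ into exactly $2(\tilde r^{n+1}-r^n)$, so that the scalar identity closes. The remaining technical check is that $\mathcal G^{-1}v^{n+1}$ is well defined, which is where the functional setting (the closed subspace $H$ and invertibility of $\mathcal G$ on it) enters.
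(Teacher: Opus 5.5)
Your proposal is correct and follows the paper's proof essentially step by step. Both arguments test the velocity equation with $\tau_{n+1}\mathcal G^{-1}v^{n+1}$ and use the third equation to convert the nonlinear term into $-2\tilde r^{n+1}(\tilde r^{n+1}-r^n)$, apply \eqref{eq:norm-identity} to obtain the intermediate identity, and then insert $(r^{n+1})^2-(\tilde r^{n+1})^2$ so that the definition of $\mathcal V_n$ closes the estimate. Your remark that $\mathcal G^{-1}v^{n+1}$ must be well defined (i.e., $v^{n+1}\in H$) is a point the paper leaves implicit.
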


\begin{proof}
Multiplying the second equation of \eqref{eq:relaxed-sav-scheme} by \(\tau_{n+1}\) and testing it with \(\mathcal G^{-1}v^{n+1}\), and using \(u^{n+1}-u^n=\tau_{n+1}v^{n+1}\) in the stabilization term, we obtain
\[
\begin{aligned}
&\langle v^{n+1}-v^n,\mathcal G^{-1}v^{n+1}\rangle
+\tau_{n+1}\eta^{n+1}\|v^{n+1}\|_{\mathcal G^{-1}}^2
+\tau_{n+1}^2
\langle\mathcal S v^{n+1},\mathcal G^{-1}v^{n+1}\rangle\\
&\qquad
= -\tau_{n+1}\tilde r^{\,n+1}
\left\langle
\frac{\mathcal{L} u^n + f(u^n)}{\sqrt{E(u^n)+C}},
v^{n+1}
\right\rangle .
\end{aligned}
\]
Multiplying the third equation of \eqref{eq:relaxed-sav-scheme} by \(2\tau_{n+1}\tilde r^{\,n+1}\) and combining it with the identity above yields
\[
\begin{aligned}
2\tilde r^{\,n+1}(\tilde r^{\,n+1}-r^n)
&=
-\langle v^{n+1}-v^n,\mathcal G^{-1}v^{n+1}\rangle
-\tau_{n+1}\eta^{n+1}
\|v^{n+1}\|_{\mathcal G^{-1}}^2\\
&\quad
-\tau_{n+1}^2
\langle\mathcal S v^{n+1},\mathcal G^{-1}v^{n+1}\rangle .
\end{aligned}
\]
Using the identity \eqref{eq:norm-identity} and the definition of \(\mathcal D\) in \eqref{eq:mathcal-D}, we have
\begin{align}\label{eq:energy-identity}
\frac12\left(\|v^{n+1}\|_{\mathcal G^{-1}}^2-\|v^n\|_{\mathcal G^{-1}}^2\right)
&=-\mathcal D(v^{n+1},v^n)-(\tilde r^{\,n+1})^2+(r^{n})^2-(\tilde r^{\,n+1}-r^n)^2.
\end{align}
Rearranging the above identity gives
\begin{equation}
\label{eq:intermediate-step-estimate}
(\tilde r^{n+1})^2
+\frac12\|v^{n+1}\|_{\mathcal G^{-1}}^2
=
\bar{\mathcal E}^{n}
-\mathcal D(v^{n+1},v^n)
-(\tilde r^{n+1}-r^n)^2
\le
\bar{\mathcal E}^{n}.
\end{equation}
Adding \((r^{n+1})^2-(r^n)^2\) to both sides of \eqref{eq:energy-identity} gives
\[
\begin{aligned}
\bar{\mathcal E}^{n+1}-\bar{\mathcal E}^{n}
=
-
\mathcal D(v^{n+1},v^n)
+
(r^{n+1})^2
-
(\tilde r^{\,n+1})^2
-
(\tilde r^{\,n+1}-r^n)^2.
\end{aligned}
\]
By the definition of the admissible set \(\mathcal V_n\) in
\eqref{eq:xi-set}, we have
\[
\bar{\mathcal E}^{n+1}-\bar{\mathcal E}^{n}
\le
-(1-\vartheta)\mathcal D(v^{n+1},v^n).
\]
Since \(\vartheta\in[0,1)\) and
\(\mathcal D(v^{n+1},v^n)\ge0\), \eqref{eq:discrete_energy_law-relax} follows. Iterating this inequality yields $\bar{\mathcal E}^{n}\le\bar{\mathcal E}^{0}$. Together with \eqref{eq:intermediate-step-estimate}, this gives \eqref{eq:intermediate-energy-estimate}.
\end{proof}

We next specify a way to choose $\xi$. Define the reference value
\[
r_E^{n+1}:=\sqrt{E(u^{n+1})+C}.
\]
Since the purpose of the relaxation is to prevent the auxiliary variable from decaying too rapidly compared to its reference value, we apply it only when $\tilde r^{n+1}<r_E^{n+1}$. Otherwise, we set $\xi=1$.

For $\tilde r^{n+1}<r_E^{n+1}$, substituting $r^{n+1}
=\xi\tilde r^{n+1}+(1-\xi)r_E^{n+1}$ into the defining inequality of $\mathcal V_n$ yields
\begin{equation}\label{eq:quadratic_inequality}
a\xi^2+b\xi+c\le0,
\end{equation}
with coefficients $a = (\tilde{r}^{n+1}-r_E^{n+1})^2$, $b = 2(\tilde{r}^{n+1}-r_E^{n+1})r_E^{n+1}$ and
\begin{align*}
    c = (r_E^{n+1})^2 -(\tilde{r}^{n+1})^2-(\tilde{r}^{n+1}-r^n)^2-\vartheta\mathcal{D}(v^{n+1}, v^{n}).
\end{align*}
Since $\tilde r^{n+1}<r_E^{n+1}$, we have $a>0$. Moreover,
\[
b^2-4ac=4(\tilde r^{n+1}-r_E^{n+1})^2
\left[
(\tilde r^{n+1})^2
+(\tilde r^{n+1}-r^n)^2
+\vartheta\mathcal D(v^{n+1},v^n)
\right]\ge0.
\]
Hence the associated quadratic equation has two real roots $\xi_1$ and $\xi_2$, possibly coincident, with
$\xi_1\le\xi_2$, where
\begin{align}\label{eq:xi-calculation}
\xi_{1,2} = \frac{r_E^{n+1} \mp \sqrt{(\tilde{r}^{n+1})^2 + (\tilde{r}^{n+1} - r^n)^2 + \vartheta \mathcal{D}(v^{n+1}, v^{n})}}{r_E^{n+1} - \tilde{r}^{n+1}}.
\end{align}
The solution set of the quadratic inequality is therefore
$[\xi_1,\xi_2]$. Since $\xi=1$ satisfies
\eqref{eq:quadratic_inequality}, we have \([\xi_1,\xi_2]\cap[0,1]\neq\emptyset\). Moreover, for $\tilde r^{n+1}<r_E^{n+1}$, \([\xi_1,\xi_2]\cap[0,1]\subseteq\mathcal V_n\). Following \cite{JiaZhaZha22,LiuSheZha23}, we choose 
\begin{equation}\label{eq:xi-choice}
\xi=
\begin{cases}
1, & \tilde r^{n+1}\ge r_E^{n+1},\\[1mm]
\max\{0,\xi_1\}, & \tilde r^{n+1}<r_E^{n+1}.
\end{cases}
\end{equation}
In the second case, $\max\{0,\xi_1\}\in[\xi_1,\xi_2]\cap[0,1]$.
Thus the above choice belongs to $\mathcal V_n$.

The effectiveness of this relaxation strategy is illustrated numerically in Example~\ref{eg:4-1}.

\subsection{Adaptive time-stepping and algorithm}\label{subsec:adaptive}
The modified-energy stability established above permits a flexible choice of time steps. Following \cite{LiuSheZha23}, we use
\[
I^n:=\frac{r^n}{\sqrt{E(u^n)+C}}
\]
as an indicator for adaptive time stepping. Given $I_{\rm thr}\in(0,1)$, $0<\tau_{\min}\le\tau_0\le\tau_{\max}<\infty$, and a growth factor $\rho>1$, the time step is reduced when $I^n<I_{\rm thr}$ and
increased otherwise, while remaining in $[\tau_{\min},\tau_{\max}]$. The complete adaptive relaxed SF-SAV-E algorithm is summarized in Algorithm~\ref{alg:final_scheme}.

\begin{algorithm}[!htbp]
\caption{An Adaptive Relaxed SF-SAV-E Algorithm}
\label{alg:final_scheme}
\begin{algorithmic}[1]
    \STATE \textbf{Inputs:} \parbox[t]{0.82\linewidth}{Initial state $(u^0, v^0)$; parameters $0<\tau_{\min}\le\tau_0\le\tau_{\max}$, $\rho, I_{\text{thr}}, C, \vartheta$; damping function $\eta(t)$.}
    \STATE \textbf{Initialize:} $r^0 \gets \sqrt{E(u^0) + C}$, $t^0 \gets 0$, $n \gets 0$.
    \WHILE{the stopping criterion is not satisfied}
        \STATE \textit{\# 1. Adapt Time Step} \vspace{2mm}
        \STATE Compute the ratio indicator: $I^n=r^n / \sqrt{E(u^{n}) + C}$
        \vspace{0.5mm}
        \IF{\( I^n < I_{\text{thr}}\)}
            \STATE \(\tau_{n+1} = \max\{I^n \tau_n, \tau_{\min} \}\).
        \ELSE
            \STATE \(\tau_{n+1} = \min\{\rho\tau_n,\tau_{\max}\}\).
        \ENDIF
        \STATE Set $t^{n+1}=t^n+\tau_{n+1}$ and $\eta^{n+1}=\eta(t^{n+1})$.
\vspace{2mm}
\STATE \textit{\# 2. Compute Intermediate State $(u^{n+1}, v^{n+1}, \tilde{r}^{n+1})$}
\vspace{1mm}
\STATE Solve the decoupled systems for $u_1^{n+1}$ and $u_2^{n+1}$ via Eq. \eqref{eq:decoupled-subsystems}.
\vspace{0.5mm}
\STATE Compute $\tilde{r}^{n+1}$ via Eq. \eqref{eq:r_update_formula}.
\vspace{0.5mm}
\STATE Update $u^{n+1} = u_1^{n+1} + \tilde{r}^{n+1} u_2^{n+1}$ and $v^{n+1} = (u^{n+1} - u^n)/\tau_{n+1}$.
\vspace{2mm}
\STATE \textit{\# 3. Apply Relaxation Step}
\vspace{1mm}
\IF{$\sqrt{E(u^{n+1}) + C} - \tilde{r}^{n+1} > 0$}
    \STATE Compute $\xi = \max\{0, \xi_1\}$ according to \eqref{eq:xi-calculation}.
\ELSE
    \STATE $\xi = 1$
\ENDIF
        \STATE Update $r^{n+1} = \xi \tilde{r}^{n+1} + (1 - \xi) \sqrt{E(u^{n+1}) + C}$.
        \STATE $n := n + 1$
    \ENDWHILE
    \RETURN $u^n$
\end{algorithmic}
\end{algorithm}

\section{Subsequential convergence to stationary points}\label{sect:conditional-convergence}

In this section, we establish subsequential convergence for the two representative models considered in this paper, namely the GL and CSH models. Throughout this section, we consider the adaptive time steps generated by Algorithm~\ref{alg:final_scheme}, which satisfy
\begin{equation}
\label{eq:time-step-bounds}
0<\tau_{\min}
\le
\tau_{n+1}
\le
\tau_{\max}
<\infty,
\qquad n\ge0.
\end{equation}

We first introduce a ratio-control hypothesis for the relaxed SF-SAV-E scheme, whose numerical support will be illustrated in Example~\ref{eg:hypothesis-ratio-evidence}.
\subsection{A ratio-control hypothesis}
\label{subsec:ratio-roadmap}
\begin{hypothesis}[Ratio control]
\label{hyp:ratio-control}
Let $\{(u^n, v^n, r^n)\}_{n \ge 0}$ be the numerical sequence generated by the adaptive relaxed SF-SAV-E scheme, and let $\tilde{r}^{n+1}$ be the associated intermediate auxiliary variable. There exist an index \(N\in\mathbb N\) and a constant \(\delta\in(0,1)\) such that, for all \(n\ge N\), the ratio
\[
a^n := \frac{\tilde r^{n+1}}{\sqrt{E(u^n)+C}} \in [\delta,\, 2-\delta].
\]
\end{hypothesis}

\begin{remark}
The ratio in Hypothesis~\ref{hyp:ratio-control} can be decomposed as
\[
a^n=I^n+B^n,\qquad
I^n:=\frac{r^n}{\sqrt{E(u^n)+C}},\qquad
B^n:=\frac{\widetilde r^{n+1}-r^n}{\sqrt{E(u^n)+C}}.
\]
Here $I^n$ is the indicator used in adaptive time stepping, while $B^n$ accounts for the increment of the auxiliary variable. Hypothesis~\ref{hyp:ratio-control} is examined numerically in Example~\ref{ex:adaptive-ratio-gl}.
\end{remark}

\begin{lemma}[Original-energy bound under ratio control]
\label{lem:universal-physical-energy-bound}
Assume that the modified energy dissipation law holds and that Hypothesis \ref{hyp:ratio-control} is satisfied. Then the original energy \(E(u^n)\) is uniformly bounded from above for all \(n\in\mathbb N\).
\end{lemma}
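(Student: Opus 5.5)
The plan is to convert the modified-energy bound into a bound on $E(u^n)$ through Hypothesis~\ref{hyp:ratio-control}. By Theorem~\ref{thm:energy-stability-relax}, specifically \eqref{eq:intermediate-energy-estimate}, we have $(\tilde r^{n+1})^2\le \bar{\mathcal E}^{n}\le\bar{\mathcal E}^0$ for every $n$, so $|\tilde r^{n+1}|\le \sqrt{\bar{\mathcal E}^0}$ uniformly. For $n\ge N$ the hypothesis gives $\tilde r^{n+1}\ge \delta\sqrt{E(u^n)+C}>0$. Combining the two bounds,
\[
\sqrt{E(u^n)+C}\le \frac{\tilde r^{n+1}}{\delta}\le \frac{\sqrt{\bar{\mathcal E}^0}}{\delta},
\qquad\text{hence}\qquad
E(u^n)\le \frac{\bar{\mathcal E}^0}{\delta^2}-C,\qquad n\ge N .
\]
Only the lower bound $a^n\ge\delta$ is needed here. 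The upper bound $2-\delta$ presumably serves later steps.

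It remains to handle the finitely many indices $0\le n<N$. For each such $n$, $u^n$ is a well-defined element produced by the uniquely solvable linear updates of Section~\ref{subsec:relaxed-decoupled}. Hence $E(u^n)$ is a finite real number, the quantity $\sqrt{E(u^n)+C}$ having been evaluated in the algorithm itself. Therefore
\[
\sup_{n}E(u^n)\le \max\Bigl\{\max_{0\le n<N}E(u^n),\ \frac{\bar{\mathcal E}^0}{\delta^2}-C\Bigr\}<\infty .
\]
Note that $\bar{\mathcal E}^0=E(u^0)+C+\tfrac12\|v^0\|_{\mathcal G^{-1}}^2$ by the initialization $r^0=\sqrt{E(u^0)+C}$, so the bound is explicit in the data.

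The only delicate point is the finiteness of $E(u^n)$ for $n<N$. This requires that each iterate lies in the energy space, for instance $H^1$ for the GL model and $H^2$ for the CSH model, so that $E(u^n)<\infty$. I would justify this by noting that $\mathcal A_{\tau,\eta}^{-1}$ maps the right-hand sides of \eqref{eq:decoupled-subsystems} into the required Sobolev space, given $u^0$ in the energy space. This elliptic regularity is standard for constant-coefficient operators under periodic boundary conditions. Even without the explicit regularity argument, the finiteness is implicit in the well-definedness of the algorithm, which computes $E(u^{n})$ at every step. Everything else is a direct application of the theorem.
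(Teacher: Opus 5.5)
Your proposal is correct and is essentially the paper's proof: you combine \eqref{eq:intermediate-energy-estimate} with $a^n\ge\delta$ for $n\ge N$ to get $E(u^n)+C\le\bar{\mathcal E}^0/\delta^2$, and then absorb the finitely many indices $n<N$. Your remark that $E(u^n)$ is finite for $n<N$ because the iterates have the required regularity spells out a point the paper leaves implicit.
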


\begin{proof}
For all $n\ge N$, Hypothesis~\ref{hyp:ratio-control} gives $a^n\ge\delta>0$. Combining this with \eqref{eq:intermediate-energy-estimate}, we obtain
\[
E(u^n)+C
=
\left(\frac{\widetilde r^{n+1}}{a^n}\right)^2
\le
\frac{\bar{\mathcal E}^0}{\delta^2},
\qquad n\ge N.
\]
Together with the finitely many values for \(n<N\), this yields $\sup_{n\ge0}E(u^n)<\infty$.
\end{proof}

\subsection{Subsequential convergence to stationary points for the GL model}\label{subsec:GL-convergence}

We first consider the GL model. For $d\le3$, we consider either a periodic box with periodic boundary conditions or a bounded $C^{1,1}$ domain with homogeneous Dirichlet or Neumann boundary conditions. We work with one of these boundary conditions throughout this subsection and let $V$ denote the corresponding test space $H^1_{\rm per}(\Omega)$, $H_0^1(\Omega)$, or $H^1(\Omega)$, respectively. Here \(H^1_{\rm per}(\Omega)\) denotes the periodic \(H^1\) space.

The GL energy is
\begin{equation}
E(u)=\int_{\Omega}\left(\frac12|\nabla u|^2+\frac{1}{4\varepsilon^2}(u^2-1)^2\right)\,d\bx,
\label{eq:GL-energy-convergence}
\end{equation}
where $\varepsilon>0$ is a model parameter. In this case,
\[
\mathcal L=-\Delta,\qquad \mathcal G=I,\qquad
f(u)=\frac{1}{\varepsilon^2}(u^3-u).
\]
For consistency with the numerical experiments in Section~\ref{subsec:GL}, we take
\[
\mathcal S=\mathcal G\mathcal L=-\Delta.
\]
We first record two model-specific estimates that will be used repeatedly below.

\begin{proposition}[Basic estimates for the GL model]\label{prop:GL-basic-estimates}
Let $E$ be given by \eqref{eq:GL-energy-convergence}. Then the following statements hold.
\begin{itemize}
    \item[(i)] \textbf{Energy coercivity.}
    There exist constants \(c_0>0\) and \(C_0\ge 0\) such that
    \[
    E(u)+C_0 \ge c_0 \|u\|_{H^1}^2
    \qquad \text{for all } u\in H^1(\Omega).
    \]

    \item[(ii)] \textbf{Nonlinear mapping estimate.}
    If \(\{u^n\}_{n\in\mathbb N}\) is uniformly bounded in \(H^1(\Omega)\), then
    \(\{f(u^n)\}_{n\in\mathbb N}\) is uniformly bounded in \(L^2(\Omega)\).
\end{itemize}
\end{proposition}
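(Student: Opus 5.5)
For (i), I will compare the double-well term with $u^2$ through an elementary pointwise inequality. Since $(u^2-1)^2 = u^4-2u^2+1$ and $u^4 \ge 4u^2-4$ (because $(u^2-2)^2\ge0$), we have $(u^2-1)^2 \ge 2u^2-3$. Integrating over $\Omega$ then gives
\[
E(u)\ \ge\ \frac12\|\nabla u\|_{L^2}^2+\frac{1}{4\varepsilon^2}\bigl(2\|u\|_{L^2}^2-3|\Omega|\bigr).
\]
Setting $c_0:=\min\{\tfrac12,\tfrac{1}{2\varepsilon^2}\}$ and $C_0:=\tfrac{3|\Omega|}{4\varepsilon^2}$, this yields $E(u)+C_0\ge c_0\|u\|_{H^1}^2$.

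The boundedness of $\Omega$, which holds for both the periodic box and the $C^{1,1}$ domain, is what makes $|\Omega|<\infty$. No Poincaré inequality is needed, because the double-well term already controls the $L^2$ norm. If $E(u)=+\infty$ for some $u\in H^1$, the inequality is trivially true, although for $d\le3$ the energy is in fact finite by the embedding used in (ii).

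For (ii), I will write $f(u^n)=\varepsilon^{-2}\bigl((u^n)^3-u^n\bigr)$, so that
\[
\|f(u^n)\|_{L^2}\le \varepsilon^{-2}\bigl(\|u^n\|_{L^6}^3+\|u^n\|_{L^2}\bigr).
\]
The key input is the Sobolev embedding $H^1(\Omega)\hookrightarrow L^6(\Omega)$, valid for $d\le3$. For periodic boxes and bounded $C^{1,1}$ (hence Lipschitz) domains, it follows from the existence of an extension operator, or for the torus directly from the standard embedding.

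This embedding gives $\|u^n\|_{L^6}\le C_S\|u^n\|_{H^1}$, hence $\|f(u^n)\|_{L^2}\le \varepsilon^{-2}\bigl(C_S^3M^3+M\bigr)$, where $M:=\sup_n\|u^n\|_{H^1}$. The only point requiring care is justifying the embedding uniformly across the three boundary settings. I expect this to be the main (mild) obstacle, and I will handle it by citing the standard Sobolev embedding for Lipschitz domains together with its periodic analogue.
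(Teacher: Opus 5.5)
Your proposal is correct and follows essentially the same route as the paper. For (i), both arguments bound the double well pointwise from below by a multiple of $u^2$ minus a constant; you use $(s^2-1)^2\ge 2s^2-3$ where the paper uses $(s^2-1)^2\ge \frac12 s^2-1$, which only changes $c_0$ and $C_0$. For (ii), both use $\|f(u)\|_{L^2}\le\varepsilon^{-2}(\|u\|_{L^6}^3+\|u\|_{L^2})$ together with the embedding $H^1(\Omega)\hookrightarrow L^6(\Omega)$ for $d\le 3$.
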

\begin{proof}
For part~\textnormal{(i)}, the pointwise inequality
\[
(s^2-1)^2\ge \frac12s^2-1,\qquad s\in\mathbb R,
\]
gives
\[
E(u)+\frac{|\Omega|}{4\varepsilon^2}
\ge \frac12\|\nabla u\|_{L^2}^2
   +\frac{1}{8\varepsilon^2}\|u\|_{L^2}^2
\ge c_0\|u\|_{H^1}^2,
\]
where $c_0:=\min\{1/2,1/(8\varepsilon^2)\}>0$.

For part (ii), we use the Sobolev embedding $H^1(\Omega)\hookrightarrow L^6(\Omega)$ for $d\le3$. It follows that
\[
\|f(u)\|_{L^2}
\le
\frac{1}{\varepsilon^2}
\left(
\|u\|_{L^6}^3+\|u\|_{L^2}
\right)
\le
\frac{C_\Omega}{\varepsilon^2}
\left(
\|u\|_{H^1}^3+\|u\|_{H^1}
\right),
\]
for some $C_\Omega>0$. The asserted uniform $L^2$ bound follows.
\end{proof}

\begin{lemma}[Conditional boundedness for the GL model]\label{lem:GL-boundedness}
Let \(\{u^n,v^n,r^n\}\) be the sequence generated by the adaptive relaxed SF-SAV-E scheme for the GL energy \eqref{eq:GL-energy-convergence}, with initial data \(u^0\in H^2(\Omega)\) satisfying the fixed boundary condition and \(v^0\in L^2(\Omega)\). Suppose Hypothesis~\ref{hyp:ratio-control} holds. Assume that the damping sequence satisfies
\[
\frac{\omega_0}{n+1}\le \eta^{n+1}\le \omega_1
\qquad \text{for some constants } \omega_0,\omega_1>0,
\]
for all \(n\ge0\). This condition is satisfied by both constant damping and the choice \(\eta(t)=\alpha/t\). Then the following statements hold.
\begin{itemize}
    \item[(i)] The sequence $\{v^n\}_{n\ge1}$ is uniformly bounded in
    $H^1(\Omega)$. Moreover, there exists a subsequence
    $\{n_j\}_{j\ge1}$ such that
    \[
    v^{n_j}\to0,\qquad v^{n_j+1}\to0
    \qquad \text{strongly in }H^1(\Omega).
    \]
    \item[(ii)] The sequence \(\{u^n\}_{n\in\mathbb N}\) is uniformly bounded in \(H^2(\Omega)\).
\end{itemize}
\end{lemma}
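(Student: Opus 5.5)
We plan to derive everything from the summed form of the modified-energy law in Theorem~\ref{thm:energy-stability-relax}, the original-energy bound of Lemma~\ref{lem:universal-physical-energy-bound}, and an elliptic recursion for $\Delta u^n$ that uses Hypothesis~\ref{hyp:ratio-control} directly.

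\textbf{Step 1: summability.} For the GL setting we have $\mathcal G=I$ and $\mathcal S=-\Delta$, so
\[
\mathcal D(v^{n+1},v^n)=\tau_{n+1}\eta^{n+1}\|v^{n+1}\|_{L^2}^2+\tau_{n+1}^2\|\nabla v^{n+1}\|_{L^2}^2+\tfrac12\|v^{n+1}-v^n\|_{L^2}^2 .
\]
Summing \eqref{eq:discrete_energy_law-relax} and using $\bar{\mathcal E}^n\ge0$ gives $\sum_n\mathcal D(v^{n+1},v^n)\le\bar{\mathcal E}^0/(1-\vartheta)$. With \eqref{eq:time-step-bounds} this yields three facts:
\[
\sum_n\|\nabla v^{n+1}\|_{L^2}^2<\infty,\qquad \sum_n\|v^{n+1}-v^n\|_{L^2}^2<\infty,\qquad \sum_n\frac{\|v^{n+1}\|_{L^2}^2}{n+1}<\infty ,
\]
where the last one uses $\eta^{n+1}\ge\omega_0/(n+1)$. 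In addition, \eqref{eq:intermediate-energy-estimate} gives the uniform bound $\|v^n\|_{L^2}^2\le2\bar{\mathcal E}^0$.

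\textbf{Step 2: the velocity (part (i)).} Each $v^{n}$, $n\ge1$, lies in $H^1$ and satisfies the boundary condition, because $u^{n}\in H^1$ solves the elliptic problem \eqref{eq:decoupled-subsystems}. Since $\nabla v^n\to0$ in $L^2$ along the whole sequence and $\|v^n\|_{L^2}$ is bounded, $\{v^n\}_{n\ge1}$ is bounded in $H^1$. Because $\sum 1/(n+1)=\infty$, the third series forces $\liminf_n\|v^{n+1}\|_{L^2}=0$. We therefore choose $n_j$ with $\|v^{n_j+1}\|_{L^2}\to0$. Then $\|v^{n_j}\|_{L^2}\le\|v^{n_j+1}\|_{L^2}+\|v^{n_j+1}-v^{n_j}\|_{L^2}\to0$, and combined with $\nabla v^{n}\to0$ this gives $v^{n_j},v^{n_j+1}\to0$ strongly in $H^1$.

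\textbf{Step 3: $H^1$ bound on $u^n$.} Lemma~\ref{lem:universal-physical-energy-bound} bounds $E(u^n)$ from above. Proposition~\ref{prop:GL-basic-estimates}(i) then gives $\sup_n\|u^n\|_{H^1}<\infty$, and Proposition~\ref{prop:GL-basic-estimates}(ii) gives $\sup_n\|f(u^n)\|_{L^2}<\infty$.

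\textbf{Step 4: $H^2$ bound (part (ii)).} We rewrite the second equation of \eqref{eq:relaxed-sav-scheme} using $a^n=\tilde r^{n+1}/\sqrt{E(u^n)+C}$:
\[
-\Delta u^{n+1}=(1-a^n)(-\Delta u^n)-a^nf(u^n)-\frac{v^{n+1}-v^n}{\tau_{n+1}}-\eta^{n+1}v^{n+1}.
\]
For $n\ge N$, Hypothesis~\ref{hyp:ratio-control} gives $|1-a^n|\le1-\delta$ and $|a^n|\le2$. The remaining terms are bounded in $L^2$ by Steps 1--3 together with $\eta^{n+1}\le\omega_1$ and $\tau_{n+1}\ge\tau_{\min}$. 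Hence
\[
\|\Delta u^{n+1}\|_{L^2}\le(1-\delta)\|\Delta u^n\|_{L^2}+K,
\]
with $K$ independent of $n$. Iterating this contraction gives $\|\Delta u^n\|_{L^2}\le\|\Delta u^N\|_{L^2}+K/\delta$.

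For the finitely many indices $n\le N$, the same identity shows inductively that $\Delta u^n\in L^2$; here we use $u^0\in H^2$ and the fact that $a^n$ is finite for each $n$. Finally, elliptic regularity for $-\Delta$ with the fixed boundary condition (periodic, or a $C^{1,1}$ domain) gives $\|u\|_{H^2}\lesssim\|\Delta u\|_{L^2}+\|u\|_{L^2}$, which yields the uniform $H^2$ bound.

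\textbf{Main obstacle.} The key step is Step 4. Without ratio control, the coefficient $1-a^n$ need not be a contraction, and the recursion could amplify $\Delta u^n$. The hypothesis $a^n\in[\delta,2-\delta]$ is exactly what makes the recursion contractive.
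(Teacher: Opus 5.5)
Your proposal is correct and follows essentially the same route as the paper: summing the modified-energy law to get the three series plus the harmonic-series argument for the subsequence in part (i), then the ratio-control energy bound, coercivity, and the contractive recursion $\|\Delta u^{n+1}\|_{L^2}\le(1-\delta)\|\Delta u^n\|_{L^2}+K$ followed by elliptic regularity for part (ii). Your explicit handling of the finitely many indices $n\le N$ is a small detail that the paper leaves implicit in its inductive regularity argument.
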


\begin{proof}
We first verify that the discrete iterates have the regularity required
below. For the GL model, \(\mathcal A_{\tau,\eta}=(1+\tau\eta)I-\tau^2\Delta\)
is a second-order elliptic operator. Standard elliptic regularity and
the decoupled formulation in Section~\ref{subsec:relaxed-decoupled}
therefore yield inductively \(u^n\in H^2(\Omega)\) for every \(n\ge0\). The first equation of \eqref{eq:relaxed-sav-scheme} then gives
\(v^n\in H^2(\Omega)\) for every \(n\ge1\).

For part \textnormal{(i)}, since
\[
\bar{\mathcal E}^n=(r^n)^2+\frac12\|v^n\|_{L^2}^2,
\]
the bound \(\bar{\mathcal E}^n\le \bar{\mathcal E}^0\) implies that \(\{v^n\}_{n\in\mathbb N}\) is uniformly bounded in \(L^2(\Omega)\).

Moreover, by the dissipation estimate \eqref{eq:discrete_energy_law-relax},
\[
(1-\vartheta)\mathcal D(v^{n+1},v^n)\le \bar{\mathcal E}^n-\bar{\mathcal E}^{n+1},
\]
with
\[
\mathcal D(v^{n+1},v^n)
=
\tau_{n+1}\eta^{n+1}\|v^{n+1}\|_{L^2}^2
+\tau_{n+1}^2\|\nabla v^{n+1}\|_{L^2}^2
+\frac12\|v^{n+1}-v^n\|_{L^2}^2.
\]
Summing over $n$ and using $\tau_{n+1}\ge\tau_{\min}>0$, we obtain
\[
\sum_{n=0}^{\infty}
\left(
\eta^{n+1}\|v^{n+1}\|_{L^2}^2
+
\|\nabla v^{n+1}\|_{L^2}^2
+
\|v^{n+1}-v^n\|_{L^2}^2
\right)
<\infty.
\]
Hence $\|\nabla v^n\|_{L^2}\to0$ and $\|v^{n+1}-v^n\|_{L^2}\to0$. Together with the uniform $L^2$ bound, this also shows that $\{v^n\}_{n\ge1}$ is uniformly bounded in $H^1(\Omega)$.

Moreover, using $\eta^{n+1}\ge\omega_0/(n+1)$, we have
\[
\sum_{n=0}^{\infty}\frac{\|v^{n+1}\|_{L^2}^2}{n+1}<\infty.
\]
Since the harmonic series diverges, there exists a subsequence $\{n_j\}$ such that $v^{n_j+1}\to0$ strongly in $L^2(\Omega)$. Since $\|v^{n+1}-v^n\|_{L^2}\to0$, we also have $v^{n_j}\to0$ strongly in $L^2(\Omega)$. Combining this with $\|\nabla v^n\|_{L^2}\to0$, we conclude that
\[
v^{n_j}\to0,\qquad v^{n_j+1}\to0
\qquad\text{strongly in }H^1(\Omega).
\]
This proves part \textnormal{(i)}.

For part~\textnormal{(ii)}, Lemma~\ref{lem:universal-physical-energy-bound}
yields a uniform upper bound for \(E(u^n)\).
Proposition~\ref{prop:GL-basic-estimates}\textnormal{(i)} therefore gives
\[
\sup_{n\in\mathbb N}\|u^n\|_{H^1}<\infty.
\]
By Proposition~\ref{prop:GL-basic-estimates}\textnormal{(ii)},
\(\{f(u^n)\}_{n\in\mathbb N}\) is thus uniformly bounded in
\(L^2(\Omega)\).

The second equation of \eqref{eq:relaxed-sav-scheme} can be written as
\[
-\Delta u^{n+1}
=
(1-a^n)(-\Delta u^n)-a^n f(u^n)
-\frac{v^{n+1}-v^n}{\tau_{n+1}}
-\eta^{n+1}v^{n+1},
\]
where \(a^n\) is defined in Hypothesis~\ref{hyp:ratio-control}. By Hypothesis~\ref{hyp:ratio-control}, there exist \(N\in\mathbb N\) and \(\delta\in(0,1)\) such that
\[
|1-a^n|\le 1-\delta,
\qquad
|a^n|\le 2-\delta
\qquad \text{for all } n\ge N.
\]

By part \textnormal{(i)}, \(\{v^n\}_{n\in\mathbb N}\) is uniformly bounded in \(L^2(\Omega)\). Using also \(\tau_{n+1}\ge \tau_{\min}>0\) and \(\eta^{n+1}\le \omega_1\), the last three terms on the right-hand side are uniformly bounded in \(L^2(\Omega)\). Consequently, there exists a constant \(M>0\), independent of \(n\), such that
\[
\|\Delta u^{n+1}\|_{L^2}
\le
(1-\delta)\|\Delta u^n\|_{L^2}+M
\qquad \text{for all } n\ge N.
\]
By induction, for all $n\ge N$,
\[
\|\Delta u^n\|_{L^2}
\le
(1-\delta)^{n-N}\|\Delta u^N\|_{L^2}
+
M\sum_{j=0}^{n-N-1}(1-\delta)^j
\le
\|\Delta u^N\|_{L^2}+\frac{M}{\delta}.
\]
Hence $\{\Delta u^n\}_{n\in\mathbb N}$ is uniformly bounded in $L^2(\Omega)$. Then standard elliptic regularity yields the uniform \(H^2(\Omega)\)-bound on \(\{u^n\}\).
This completes the proof of part~(ii).
\end{proof}
\begin{remark}
    The lower bound \(\eta^{n+1}\ge {\omega_0}/{(n+1)}\) in
Lemma~\ref{lem:GL-boundedness} is used to obtain a subsequence of the
velocity converging to zero in \(L^2(\Omega)\), since the present choice
\(\mathcal S=-\Delta\)  yields only
\[
\langle \mathcal S v,v\rangle=\|\nabla v\|_{L^2}^2.
\]
If instead \(\mathcal S=I-\Delta\) is used, then \(\langle \mathcal S v,v\rangle=\|v\|_{H^1}^2\), and this lower bound is no longer needed. Similarly, for the CSH model considered in Section~\ref{subsec:CSH-convergence}, under Assumption~\ref{ass:CSH-domain-size}, the choice \(\mathcal S=-\Delta(\Delta+1)^2=\mathcal G\mathcal L\) gives
\[
\langle \mathcal S v,\mathcal G^{-1}v\rangle=
\|(\Delta+1)v\|_{L^2}^2
\ge c\|v\|_{H^2}^2
\]
for some \(c>0\) and all \(v\in\dot H^2(\Omega)\), so no analogous lower bound on the damping
sequence is required.
\end{remark}
\begin{theorem}[Subsequential convergence to stationary points for the GL model]
\label{thm:GL-convergence}
Suppose that Hypothesis~\ref{hyp:ratio-control} holds and that the assumptions of Lemma~\ref{lem:GL-boundedness} are satisfied. Then there exist a subsequence $\{n_j\}$ and $u^\ast\in H^2(\Omega)$ such that
\[
u^{n_j}\rightharpoonup u^\ast
\quad\text{weakly in }H^2(\Omega),
\qquad
u^{n_j}\to u^\ast
\quad\text{strongly in }H^1(\Omega).
\]
Moreover,
\[
\langle\nabla u^\ast,\nabla\phi\rangle
+
\frac{1}{\varepsilon^2}
\left\langle
(u^\ast)^3-u^\ast,\phi
\right\rangle
=0,
\qquad
\forall\,\phi\in V.
\]
Hence $u^\ast$ is a stationary point of the GL energy.
\end{theorem}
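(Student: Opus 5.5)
We start from Lemma~\ref{lem:GL-boundedness}. Part (ii) gives $\sup_n\|u^n\|_{H^2}<\infty$, and part (i) provides a subsequence $\{n_j\}$ with $v^{n_j}\to0$ and $v^{n_j+1}\to0$ strongly in $H^1(\Omega)$. We pass to a further subsequence, still denoted $\{n_j\}$, using weak compactness in $H^2$ and the compact embedding $H^2\hookrightarrow H^1$ (Rellich, valid for the $C^{1,1}$ or periodic domains considered). This yields $u^{n_j}\rightharpoonup u^\ast$ weakly in $H^2$ and $u^{n_j}\to u^\ast$ strongly in $H^1$. Since $u^{n_j+1}=u^{n_j}+\tau_{n_j+1}v^{n_j+1}$, $\tau\le\tau_{\max}$, and $v^{n_j+1}\to0$ in $H^1$, we also get $u^{n_j+1}\to u^\ast$ strongly in $H^1$. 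The boundary condition passes to the limit because $V$ is weakly closed, so $u^\ast\in V$. Bounded sequences $a^{n_j}\in[\delta,2-\delta]$ and $\tau_{n_j+1}\in[\tau_{\min},\tau_{\max}]$ also have convergent subsequences; we extract them so that $a^{n_j}\to a^\ast\in[\delta,2-\delta]$.

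Next we test the second equation of \eqref{eq:relaxed-sav-scheme}, written in the form used in the proof of Lemma~\ref{lem:GL-boundedness}, against $\phi\in V$ and integrate by parts:
\[
\langle\nabla u^{n+1},\phi'\rangle\text{-type terms:}\quad
\langle\nabla(u^{n+1}-u^n),\nabla\phi\rangle+a^n\bigl(\langle\nabla u^n,\nabla\phi\rangle+\langle f(u^n),\phi\rangle\bigr)
=-\Bigl\langle \tfrac{v^{n+1}-v^n}{\tau_{n+1}}+\eta^{n+1}v^{n+1},\phi\Bigr\rangle .
\]
Along $n=n_j$ the following limits hold. The first term tends to zero because $u^{n+1}-u^n=\tau_{n+1} v^{n+1}\to0$ in $H^1$. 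The right-hand side tends to zero because $v^{n_j},v^{n_j+1}\to0$ in $L^2$, $\tau_{n+1}\ge\tau_{\min}$, and $\eta^{n+1}\le\omega_1$. For the nonlinear term, $\|f(u^{n})-f(u^\ast)\|_{L^2}\lesssim(\|u^n\|_{L^6}^2+\|u^\ast\|_{L^6}^2+1)\|u^n-u^\ast\|_{L^6}$, and the $H^1\hookrightarrow L^6$ embedding with strong $H^1$ convergence gives $f(u^{n_j})\to f(u^\ast)$ in $L^2$. Hence
\[
a^\ast\Bigl(\langle\nabla u^\ast,\nabla\phi\rangle+\varepsilon^{-2}\langle (u^\ast)^3-u^\ast,\phi\rangle\Bigr)=0 .
\]
Since $a^\ast\ge\delta>0$, the weak Euler--Lagrange equation follows for all $\phi\in V$.

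The main obstacle is ensuring that the SAV prefactor does not degenerate. In the limit it appears as the factor $a^\ast$, and without Hypothesis~\ref{hyp:ratio-control} one could have $\tilde r^{n_j+1}\to0$, giving only a trivial identity (this is exactly the issue discussed around \eqref{eq:sav-e-stationary}). The hypothesis resolves it directly, so the remaining care goes into two points. First, the stabilization term $\mathcal S(u^{n+1}-u^n)$ must be handled through the velocity bound, which requires $H^1$ rather than mere $L^2$ convergence of $v^{n_j+1}$; Lemma~\ref{lem:GL-boundedness}(i) provides this. Second, in the Neumann case the integration by parts must be justified: it holds because $u^n\in H^2$ satisfies the natural boundary condition, by construction from the elliptic solves.
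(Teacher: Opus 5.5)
Your proposal is correct and follows the paper's proof essentially step by step: the same extraction via Lemma~\ref{lem:GL-boundedness} and the compact embedding $H^2\hookrightarrow H^1$, the same tested form of the second equation of the scheme, and the same limit passage using $u^{n_j+1}-u^{n_j}=\tau_{n_j+1}v^{n_j+1}\to0$ in $H^1$ together with $f(u^{n_j})\to f(u^\ast)$ in $L^2$. The only differences are cosmetic. You extract a subsequence with $a^{n_j}\to a^\ast\ge\delta$ and divide by $a^\ast$ at the end, whereas the paper divides by $a^{n_j}$ first and uses $1/a^{n_j}\le 1/\delta$. You also add the remarks on $u^\ast\in V$ and on integration by parts in the Neumann case, which the paper leaves implicit. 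The stray fragment ``$\langle\nabla u^{n+1},\phi'\rangle$-type terms'' in your display should simply be deleted.
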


\begin{proof}
By Lemma~\ref{lem:GL-boundedness}, $\{u^n\}$ is uniformly bounded in $H^2(\Omega)$, and there exists a subsequence $\{n_j\}$ such that $v^{n_j},v^{n_j+1}\to0$ strongly in $H^1(\Omega)$. By the Banach--Alaoglu theorem and the compact embedding $H^2(\Omega)\hookrightarrow H^1(\Omega)$, after passing to a further subsequence if necessary,
\[
u^{n_j}\rightharpoonup u^\ast
\quad\text{weakly in }H^2(\Omega),
\qquad
u^{n_j}\to u^\ast
\quad\text{strongly in }H^1(\Omega),
\]
for some $u^\ast\in H^2(\Omega)$. Moreover, since $u^{n_j+1}-u^{n_j}=\tau_{n_j+1}v^{n_j+1}$, the upper bound on the time step implies $u^{n_j+1}\to u^\ast$ strongly in $H^1(\Omega)$.

For $j$ sufficiently large, Hypothesis~\ref{hyp:ratio-control} gives $a^{n_j}\ge\delta$. Testing the second equation of the SF-SAV-E scheme \eqref{eq:relaxed-sav-scheme} with $\phi\in V$, using $\mathcal G=I$ and $\mathcal L=-\Delta$, and rearranging, we obtain
\[
\begin{aligned}
\langle\nabla u^{n_j},\nabla\phi\rangle
+\langle f(u^{n_j}),\phi\rangle
={}&
-\frac{1}{a^{n_j}}
\left\langle
\nabla(u^{n_j+1}-u^{n_j}),\nabla\phi
\right\rangle\\
&-
\frac{1}{a^{n_j}}
\left\langle
\frac{v^{n_j+1}-v^{n_j}}{\tau_{n_j+1}}
+\eta^{n_j+1}v^{n_j+1},
\phi
\right\rangle .
\end{aligned}
\]
Since $a^{n_j}\ge\delta$, the factors $1/a^{n_j}$ are uniformly bounded. The bounds on $\tau_{n+1}$ and $\eta^{n+1}$, together with $u^{n_j+1}-u^{n_j}\to0$ in $H^1(\Omega)$ and $v^{n_j},v^{n_j+1}\to0$ in $L^2(\Omega)$, show that the right-hand side tends to zero. Moreover, the strong $H^1$ convergence and $H^1(\Omega)\hookrightarrow L^6(\Omega)$ imply $f(u^{n_j})\to f(u^\ast)$ strongly in $L^2(\Omega)$. Passing to the limit yields
\[
\langle\nabla u^\ast,\nabla\phi\rangle
+
\frac{1}{\varepsilon^2}
\left\langle
(u^\ast)^3-u^\ast,\phi
\right\rangle
=0,
\qquad
\forall\,\phi\in V.
\]
This completes the proof.
\end{proof}

\subsection{Subsequential convergence to stationary points for CSH model}\label{subsec:CSH-convergence}
We next consider the CSH model, which is widely studied in materials science in connection with phase-field-crystal-type pattern formation~\cite{EldGra04,EldKatHaaGra02}. Under periodic boundary conditions, let \(\Omega=[0,L]^d\) with \(d\le 3\). Its energy functional is
\begin{equation}
E(u)=\frac12\int_{\Omega}(\Delta u+u)^2\,d\bx
+\int_{\Omega}\left(-\frac{\sigma}{2}u^2+\frac14 u^4\right)\,d\bx,
\label{eq:SH-energy-convergence}
\end{equation}
where $\sigma>0$ is a model parameter, with the mass constraint $\int_{\Omega}u\,d\bx=\mathrm{constant}$.
In this case,
\[
\mathcal L=(\Delta+1)^2,\qquad \mathcal G=-\Delta,\qquad f(u)=u^3-\sigma u.
\]
We take
\[ \mathcal S=\mathcal G\mathcal L=-\Delta(\Delta+1)^2,
\] 
as in the numerical experiments. For the convergence analysis below, we restrict to the zero-mean setting and use the periodic Sobolev subspaces
\[
\dot H^k(\Omega):=\left\{w\in H^k_{\mathrm{per}}(\Omega):\int_{\Omega}w\,d\bx=0\right\},
\qquad
\dot L^2(\Omega):=\dot H^0(\Omega).
\]
We denote by $P_0 g:=g-\frac{1}{|\Omega|}\int_\Omega g\,d\mathbf x$ the $L^2$-orthogonal projection onto \(\dot L^2(\Omega)\). On the zero-mean space, $-\Delta:\dot{H}^2(\Omega)\to\dot L^2(\Omega)$ is self-adjoint, positive definite, and invertible. Moreover,
\[
(-\Delta)^{-1}(-\Delta g)=P_0g,
\qquad g\in H^2_{\rm per}(\Omega).
\]

The convergence analysis follows the same overall strategy as in Section~\ref{subsec:GL-convergence}, with the additional zero-mean structure described above. We establish the basic estimates below, while the proofs of boundedness and subsequential convergence to stationary points are deferred to the Appendix.
\begin{assumption}[CSH domain-size condition]
\label{ass:CSH-domain-size}
Let \(\Omega=[0,L]^d\) be a periodic box in \(\mathbb R^d\) (\(d\le 3\)). We assume that
\(\Delta+1\) is injective on \(\dot H^2(\Omega)\),
equivalently, 
\[
    L\,\notin\,
    \left\{2\pi |\mathbf{k}|:\, \mathbf{k}\in\mathbb Z^d\setminus\{\mathbf{0}\}\right\}.
\]
\end{assumption}

Thus no nonzero Fourier mode lies in the kernel of \(\Delta+1\), which
provides the coercivity estimates used below.

\begin{proposition}[Basic estimates for the CSH model]\label{prop:CSH-basic-estimates}
Let \(E\) be given by \eqref{eq:SH-energy-convergence}. Suppose Assumption~\ref{ass:CSH-domain-size} holds. Then the following statements hold.

\begin{itemize}
\item[(i)] \textbf{Operator coercivity on the zero-mean subspace.}
    There exist constants \(c_2,c_4>0\) such that
    \begin{align*}
      \|(\Delta+1)u\|_{L^2}^2
    &\ge c_2 \|u\|_{H^2}^2
    \qquad \text{for all } u\in \dot H^2(\Omega), \\
    \|\mathcal L u\|_{L^2}
    &\ge c_4 \|u\|_{H^4}
    \qquad \text{for all } u\in \dot H^4(\Omega).
    \end{align*}
    
    \item[(ii)] \textbf{Energy coercivity.}
    \[
    E(u)+\frac{\sigma^2|\Omega|}{4}
    \ge
    \frac{c_2}{2}\|u\|_{H^2}^2,
    \qquad \text{for all }
    u\in\dot H^2(\Omega).
    \]

    \item[(iii)] \textbf{Nonlinear mapping estimate.}
    If \(\{u^n\}_{n\in\mathbb N}\) is uniformly bounded in \(H^2(\Omega)\), then
    \(\{f(u^n)\}_{n\in\mathbb N}\) is uniformly bounded in \(L^2(\Omega)\).
\end{itemize}
\end{proposition}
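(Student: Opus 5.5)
Part (i) is a Fourier argument on the periodic zero-mean space. For $u\in\dot H^2(\Omega)$ write $u=\sum_{\mathbf k\neq\mathbf 0}\hat u_{\mathbf k}e^{2\pi i\mathbf k\cdot\bx/L}$ and set $\lambda_{\mathbf k}:=(2\pi|\mathbf k|/L)^2$. Then $(\Delta+1)$ acts as the multiplier $1-\lambda_{\mathbf k}$, and $\|u\|_{H^2}^2$ is equivalent to $\sum_{\mathbf k}(1+\lambda_{\mathbf k})^2|\hat u_{\mathbf k}|^2$.

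By Assumption~\ref{ass:CSH-domain-size}, $\lambda_{\mathbf k}\neq1$ for every $\mathbf k\neq\mathbf 0$. The set $\{\lambda_{\mathbf k}\}$ is discrete and $\lambda_{\mathbf k}\to\infty$, so only finitely many $\lambda_{\mathbf k}$ lie in $[0,2]$. Hence
\[
\gamma:=\inf_{\mathbf k\neq\mathbf 0}\frac{(1-\lambda_{\mathbf k})^2}{(1+\lambda_{\mathbf k})^2}>0 .
\]
Indeed, this ratio tends to $1$ as $\lambda_{\mathbf k}\to\infty$, and for the finitely many remaining modes it is a minimum of finitely many positive numbers. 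This gives the $H^2$ estimate with $c_2$ equal to $\gamma$ times the norm-equivalence constant.

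The same argument with the multiplier $(1-\lambda_{\mathbf k})^2$ against $(1+\lambda_{\mathbf k})^2$ gives the $H^4$ bound with $c_4=\sqrt\gamma$ up to equivalence constants, since $\|\mathcal L u\|_{L^2}^2=\sum_{\mathbf k}(1-\lambda_{\mathbf k})^4|\hat u_{\mathbf k}|^2$. The only genuine point in (i) is positivity of $\gamma$. This is exactly where the domain-size assumption enters: without it some mode would satisfy $\lambda_{\mathbf k}=1$ and the infimum would vanish. Near-resonant modes are harmless because there are finitely many of them.

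For (ii), use the pointwise inequality $-\frac{\sigma}{2}s^2+\frac14 s^4\ge-\frac{\sigma^2}{4}$, which follows from completing the square: $\frac14(s^2-\sigma)^2\ge0$. Integrating over $\Omega$ gives $E(u)\ge\frac12\|(\Delta+1)u\|_{L^2}^2-\frac{\sigma^2|\Omega|}{4}$. Applying (i) then yields $E(u)+\frac{\sigma^2|\Omega|}{4}\ge\frac{c_2}{2}\|u\|_{H^2}^2$.

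For (iii), use the Sobolev embedding $H^2(\Omega)\hookrightarrow H^1(\Omega)\hookrightarrow L^6(\Omega)$, valid for $d\le3$. It gives
\[
\|f(u)\|_{L^2}\le\|u\|_{L^6}^3+\sigma\|u\|_{L^2}\le C_\Omega\bigl(\|u\|_{H^2}^3+\|u\|_{H^2}\bigr),
\]
so a uniform $H^2$ bound on $\{u^n\}$ passes directly to a uniform $L^2$ bound on $\{f(u^n)\}$, as in Proposition~\ref{prop:GL-basic-estimates}(ii).
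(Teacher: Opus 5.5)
Your proposal is correct and follows the paper's proof. Part (i) is the same Fourier/Parseval argument with a uniform lower bound on $(1-\lambda_{\mathbf k})^2/(1+\lambda_{\mathbf k})^2$, part (ii) is the same completed square $\frac14(u^2-\sigma)^2$, and part (iii) is a Sobolev embedding (you use $H^2\hookrightarrow L^6$ where the paper uses $H^2\hookrightarrow L^\infty$, and either works). One small slip that does not affect the existence claim: squaring the symbol bound gives $(1-\lambda_{\mathbf k})^4\ge\gamma^2(1+\lambda_{\mathbf k})^4$, so the $H^4$ constant is $c_4\sim\gamma$, not $\sqrt\gamma$ (note $\gamma\le 1$).
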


\begin{proof}
For part~\textnormal{(i)}, write
\[
u(\bx)=\sum_{\mathbf{k}\in\mathbb Z^d\setminus\{\mathbf{0}\}}\widehat u_{\mathbf{k}}
e^{i(2\pi \mathbf{k}/L)\cdot\bx},
\qquad
\lambda_{\mathbf{k}}=\left|\frac{2\pi {\mathbf{k}}}{L}\right|^2.
\]
Here $\widehat u_{\mathbf{k}}=|\Omega|^{-1}\int_\Omega u(\bx)e^{-i(2\pi\mathbf{k}/L)\cdot\bx}\,d\bx$.
Since $\lambda_{\mathbf{k}}\neq1$ for all
$\mathbf{k}\in\mathbb Z^d\setminus\{\mathbf{0}\}$ by Assumption~\ref{ass:CSH-domain-size}, and $\lambda_{\mathbf{k}}\to\infty$ as $|{\mathbf{k}}|\to\infty$, there exist constants $\tilde{c}_2,\tilde{c}_4>0$ such that
\[
(1-\lambda_{\mathbf{k}})^2
\ge \tilde{c}_2(1+\lambda_{\mathbf{k}})^2
\quad\mbox{and}\quad
(1-\lambda_{\mathbf{k}})^4
\ge \tilde{c}_4^2(1+\lambda_{\mathbf{k}})^4
\qquad\mbox{for all } \mathbf{k}\in\mathbb Z^d\setminus\{\mathbf{0}\}.
\]
Hence, by Parseval's identity and the equivalence of Fourier and Sobolev norms, for some constants $c_2,c_4>0$,
{\small\begin{align*}
 \|(\Delta+1)u\|_{L^2}^2
&=|\Omega|\sum_{{\mathbf{k}}\neq{\mathbf{0}}}(1-\lambda_{\mathbf{k}})^2|\widehat u_{\mathbf{k}}|^2
\ge \tilde{c}_2|\Omega|\sum_{{\mathbf{k}}\neq{\mathbf{0}}}(1+\lambda_{\mathbf{k}})^2|\widehat u_{\mathbf{k}}|^2
\ge c_2\|u\|_{H^2}^2,
\qquad u\in\dot H^2(\Omega), \\
\|\mathcal L u\|_{L^2}^2
&=|\Omega|\sum_{{\mathbf{k}}\neq{\mathbf{0}}}(1-\lambda_{\mathbf{k}})^4|\widehat u_{\mathbf{k}}|^2
\ge \tilde{c}_4^2|\Omega|\sum_{{\mathbf{k}}\neq{\mathbf{0}}}(1+\lambda_{\mathbf{k}})^4|\widehat u_{\mathbf{k}}|^2
\ge c_4^2\|u\|_{H^4}^2,
\qquad u\in\dot H^4(\Omega).
\end{align*}}
This proves part~\textnormal{(i)}.

For part~\textnormal{(ii)}, using part~\textnormal{(i)}, we have
\[
E(u)
=\frac12 \|(\Delta+1)u\|_{L^2}^2+\frac14\int_{\Omega}\left(u^2-\sigma\right)^2d\bx-\frac{\sigma^2|\Omega|}{4}
\ge\frac{c_2}{2}\|u\|_{H^2}^2-\frac{\sigma^2|\Omega|}{4}.
\]
This proves part~\textnormal{(ii)}.

For part \textnormal{(iii)}, since \(d\le3\), the Sobolev embedding \(H^2(\Omega)\hookrightarrow L^\infty(\Omega)\) holds. Thus a uniform \(H^2(\Omega)\)-bound on \(\{u^n\}\) implies that \(\{f(u^n)\}\) is uniformly bounded in \(L^2(\Omega)\).
\end{proof}

\begin{lemma}[Mass preservation and conditional boundedness for CSH model]
\label{lem:CSH-boundedness}
Let \(\{u^n,v^n,r^n\}\) be the sequence generated by the adaptive relaxed SF-SAV-E scheme for the CSH energy \eqref{eq:SH-energy-convergence}. Suppose Assumption~\ref{ass:CSH-domain-size} and Hypothesis~\ref{hyp:ratio-control} hold. Assume that \(u^0\in \dot H^6(\Omega)\), \(v^0\in \dot L^2(\Omega)\), and \(\eta^{n+1}\le \eta_{\max}\) for all \(n\ge 0\). Then the following statements hold.
\begin{itemize}

\item[(i)] The zero mean is preserved, i.e.,
\[
\int_\Omega u^n\,d\bx=0,
\qquad
\int_\Omega v^n\,d\bx=0
\qquad \text{for all } n\in\mathbb N.
\]

    \item[(ii)] The sequence \(\{v^n\}_{n\in\mathbb N}\) is uniformly bounded in \(\dot L^2(\Omega)\), and
    \[
    \sum_{n=1}^{\infty}\|v^n\|_{H^2}^2<\infty.
    \]

    \item[(iii)] The sequence \(\{u^n\}_{n\in\mathbb N}\) is uniformly bounded in \(\dot H^4(\Omega)\).
\end{itemize}
\end{lemma}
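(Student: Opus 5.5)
We argue part by part, mirroring Lemma~\ref{lem:GL-boundedness} but exploiting the zero-mean structure and the stronger stabilization $\mathcal S=\mathcal G\mathcal L$.

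\textbf{Part (i): mass preservation.} We proceed by induction on $n$. Assume $u^n,v^n\in\dot H(\Omega)$. In the decoupled formulation \eqref{eq:decoupled-subsystems}, the operator $\mathcal A_{\tau,\eta}=(1+\tau\eta)I-\tau^2\Delta(\Delta+1)^2$ is a Fourier multiplier. Its zero mode is $1+\tau\eta>0$.

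\begin{itemize}
\item The right-hand side for $u_2^{n+1}$ is $-\tau^2(-\Delta)q^n$, which has zero mean, so $u_2^{n+1}$ has zero mean.
\item The right-hand side for $u_1^{n+1}$ is $(1+\tau\eta)u^n+\tau v^n+\tau^2\mathcal S u^n$. It has zero mean, since $\mathcal S$ contains a factor $\Delta$.
\end{itemize}

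Hence $u^{n+1}=u_1^{n+1}+\tilde r^{n+1}u_2^{n+1}$ has zero mean, and so does $v^{n+1}=(u^{n+1}-u^n)/\tau_{n+1}$. Regularity propagates as well. $\mathcal A$ is elliptic of order six, and $q^n$ involves $\mathcal Lu^n\in H^2$ when $u^n\in H^6$. This gives $u_2^{n+1}\in H^6$, and $u_1^{n+1}$ is even smoother, so inductively $u^n\in\dot H^6$. This justifies all later manipulations, including $\mathcal G^{-1}$ acting on zero-mean functions.

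\textbf{Part (ii): bounds on the velocity.} Theorem~\ref{thm:energy-stability-relax} gives $\tfrac12\|v^n\|_{\mathcal G^{-1}}^2\le\bar{\mathcal E}^0$. This bounds $\|v^n\|_{\dot H^{-1}}$ but not yet $\|v^n\|_{L^2}$. Summing the dissipation law gives
\[
\sum_{n\ge 0}\tau_{n+1}^2\langle\mathcal S v^{n+1},\mathcal G^{-1}v^{n+1}\rangle=\sum_{n\ge 0}\tau_{n+1}^2\|(\Delta+1)v^{n+1}\|_{L^2}^2\le \frac{\bar{\mathcal E}^0}{1-\vartheta}.
\]
With $\tau_{n+1}\ge\tau_{\min}$ and Proposition~\ref{prop:CSH-basic-estimates}(i), this yields $\sum_{n\ge1}\|v^n\|_{H^2}^2<\infty$. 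In particular $\|v^n\|_{H^2}\to0$, so $\{v^n\}$ is bounded in $L^2$, and in fact in $H^2$. No lower bound on $\eta$ is needed here.

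\textbf{Part (iii): bounds on $u^n$.} First, Lemma~\ref{lem:universal-physical-energy-bound} together with Proposition~\ref{prop:CSH-basic-estimates}(ii) gives a uniform $\dot H^2$ bound. Then Proposition~\ref{prop:CSH-basic-estimates}(iii) gives a uniform $L^2$ bound on $f(u^n)$; in fact $f(u^n)$ is bounded in $L^\infty$.

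To upgrade to $H^4$, apply $\mathcal G^{-1}=(-\Delta)^{-1}$ to the second equation of \eqref{eq:relaxed-sav-scheme}. Using $(-\Delta)^{-1}(-\Delta g)=P_0 g$ and $P_0\mathcal L u=\mathcal L u$ for zero-mean $u$, we obtain
\[
\mathcal L u^{n+1}=(1-a^n)\mathcal L u^n-a^nP_0f(u^n)-(-\Delta)^{-1}\Bigl(\frac{v^{n+1}-v^n}{\tau_{n+1}}+\eta^{n+1}v^{n+1}\Bigr).
\]
The last term is bounded in $L^2$ by a constant multiple of the $L^2$ norms of the $v$'s, which are bounded by part (ii). For $n\ge N$, Hypothesis~\ref{hyp:ratio-control} gives $|1-a^n|\le1-\delta$ and $|a^n|\le2-\delta$. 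Therefore
\[
\|\mathcal L u^{n+1}\|_{L^2}\le(1-\delta)\|\mathcal L u^n\|_{L^2}+M \qquad\text{for all } n\ge N.
\]
Iterating gives $\sup_n\|\mathcal L u^n\|_{L^2}<\infty$; the finitely many indices $n<N$ are finite because $u^n\in H^6$. Proposition~\ref{prop:CSH-basic-estimates}(i) then converts this into a uniform $\dot H^4$ bound.

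\textbf{Main obstacle.} The delicate step is applying $\mathcal G^{-1}$ correctly in part (iii). It only makes sense once every term is known to have zero mean, or after inserting $P_0$ as above, and this is exactly why part (i) comes first. The regularity induction ensuring $\mathcal L u^n\in L^2$ at each step must also hold, which is why the hypothesis $u^0\in\dot H^6$ is imposed.
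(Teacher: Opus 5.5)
Your proposal is correct and follows the paper's proof essentially step for step. Part (ii) uses the summed dissipation law with $\langle\mathcal S v,\mathcal G^{-1}v\rangle=\|(\Delta+1)v\|_{L^2}^2$ together with Proposition~\ref{prop:CSH-basic-estimates}(i), and part (iii) applies $(-\Delta)^{-1}$ to the scheme to get $\|\mathcal L u^{n+1}\|_{L^2}\le(1-\delta)\|\mathcal L u^n\|_{L^2}+M$. The only difference is cosmetic: in part (i) you read off the zero Fourier mode of the decoupled subsystems, with symbol $1+\tau\eta>0$, whereas the paper integrates the two scheme equations over $\Omega$; the two arguments are equivalent. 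One small correction: $u_1^{n+1}$ is only in $H^6$, not ``even smoother,'' because $\mathcal S u^n$ and $v^0$ lie only in $L^2$, though this does not affect your conclusion.
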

\begin{proof}
See Appendix~\ref{proof-lem:CSH-boundedness}.
\end{proof}

\begin{theorem}[Subsequential convergence to stationary points for CSH model]
\label{thm:CSH-convergence}
Let \(\{u^n,v^n,r^n\}\) be the sequence generated by the adaptive relaxed SF-SAV-E scheme for the CSH energy \eqref{eq:SH-energy-convergence}. Suppose Assumption~\ref{ass:CSH-domain-size} holds and the hypotheses of Lemma~\ref{lem:CSH-boundedness} are satisfied. Then there exist a subsequence \(\{n_j\}\) and a limit \(u^\ast\in \dot H^4(\Omega)\) such that
\[
u^{n_j}\to u^\ast \quad \text{strongly in } H^3(\Omega),
\]
\[
u^{n_j+1}\to u^\ast,\qquad
v^{n_j}\to0,\qquad
v^{n_j+1}\to0
\quad \text{strongly in } H^2(\Omega).
\]
Moreover, \(u^\ast\) is a stationary point of the CSH energy under the mass constraint, i.e.,
\[
\langle (\Delta+1)u^\ast,(\Delta+1)\phi\rangle
+\left\langle (u^\ast)^3-\sigma u^\ast,\phi\right\rangle=0
\qquad \text{for all } \phi\in \dot H^2(\Omega).
\]
\end{theorem}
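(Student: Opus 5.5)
The argument follows the proof of Theorem~\ref{thm:GL-convergence}, with the $H^{-1}$ structure handled through $P_0$ and $(-\Delta)^{-1}$.

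\textbf{Compactness.} By Lemma~\ref{lem:CSH-boundedness}(iii), $\{u^n\}$ is bounded in $\dot H^4(\Omega)$. Since $\dot H^4\hookrightarrow H^3$ is compact on the periodic box, Banach--Alaoglu gives a subsequence with $u^{n_j}\rightharpoonup u^\ast$ weakly in $H^4$ and $u^{n_j}\to u^\ast$ strongly in $H^3$. Because each $u^{n}$ has zero mean, $u^\ast\in\dot H^4$. Lemma~\ref{lem:CSH-boundedness}(ii) gives $\sum_n\|v^n\|_{H^2}^2<\infty$, so $\|v^n\|_{H^2}\to0$ along the whole sequence. In particular $v^{n_j}\to0$ and $v^{n_j+1}\to0$ in $H^2$, with no lower bound on the damping needed. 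Then $u^{n_j+1}-u^{n_j}=\tau_{n_j+1}v^{n_j+1}$ and $\tau_{n+1}\le\tau_{\max}$ give $u^{n_j+1}\to u^\ast$ in $H^2$.

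\textbf{Limit equation.} Apply $(-\Delta)^{-1}$ to the second equation of \eqref{eq:relaxed-sav-scheme}. This is legitimate because every term has zero mean, by Lemma~\ref{lem:CSH-boundedness}(i) and the fact that $\mathcal S$ and $\mathcal G$ produce zero-mean outputs. Using $(-\Delta)^{-1}(-\Delta g)=P_0g$ and dividing by $a^{n}\ge\delta$ (valid for large $n$ by Hypothesis~\ref{hyp:ratio-control}), we obtain
\[
P_0\bigl(\mathcal L u^{n}+f(u^{n})\bigr)
=-\frac{1}{a^{n}}\Bigl[(-\Delta)^{-1}\Bigl(\frac{v^{n+1}-v^n}{\tau_{n+1}}+\eta^{n+1}v^{n+1}\Bigr)+\tau_{n+1}\mathcal L v^{n+1}\Bigr].
\]
Here the stabilization difference $\mathcal S(u^{n+1}-u^n)=\tau_{n+1}(-\Delta)\mathcal Lv^{n+1}$ becomes $\tau_{n+1}\mathcal L v^{n+1}$ after inversion. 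Test this identity with $\phi\in\dot H^2(\Omega)$; then $P_0$ disappears because $\phi$ has zero mean. Move $\mathcal L=(\Delta+1)^2$ onto $\phi$ by self-adjointness, so the right side becomes
\[
-\frac{1}{a^{n}}\Bigl[\bigl\langle(-\Delta)^{-1}(\cdots),\phi\bigr\rangle+\tau_{n+1}\bigl\langle(\Delta+1)v^{n+1},(\Delta+1)\phi\bigr\rangle\Bigr].
\]
Along $n_j$ this tends to zero. The reason is that $1/a^{n_j}\le1/\delta$, $\tau\in[\tau_{\min},\tau_{\max}]$, $\eta\le\eta_{\max}$, $(-\Delta)^{-1}$ is bounded on $\dot L^2$, and $v^{n_j},v^{n_j+1}\to0$ in $H^2$.

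On the left side, $\langle(\Delta+1)u^{n_j},(\Delta+1)\phi\rangle\to\langle(\Delta+1)u^\ast,(\Delta+1)\phi\rangle$ by strong $H^2$ convergence. The nonlinear term also passes to the limit: $f(u^{n_j})\to f(u^\ast)$ in $L^2$, because $H^2\hookrightarrow L^\infty$ for $d\le3$ and
\[
|u^3-w^3|\le C\bigl(\|u\|_\infty^2+\|w\|_\infty^2\bigr)|u-w|.
\]
This yields the stationarity identity.

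\textbf{Main obstacle.} The only delicate point is carrying the $H^{-1}$ structure correctly. One must check that $(-\Delta)^{-1}$ applies to each term, which needs the mean-zero property of Lemma~\ref{lem:CSH-boundedness}(i). One must also check that the stabilization term, rewritten through $\mathcal S=\mathcal G\mathcal L$, vanishes in the weak form under only $H^2$ convergence of $v$. Writing it as $\langle(\Delta+1)v,(\Delta+1)\phi\rangle$ achieves this, since only $\phi\in\dot H^2$ is needed.
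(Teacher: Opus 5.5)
Your proposal is correct and follows essentially the same route as the paper's proof. Both arguments use compactness from the uniform $\dot H^4$ bound and $v^n\to0$ in $H^2$ from Lemma~\ref{lem:CSH-boundedness}, then apply $(-\Delta)^{-1}$ to the second equation of \eqref{eq:relaxed-sav-scheme}, use $a^{n_j}\ge\delta$, test with $\phi\in\dot H^2(\Omega)$, and pass to the limit. Your stabilization term $\tau_{n_j+1}\langle(\Delta+1)v^{n_j+1},(\Delta+1)\phi\rangle$ is the paper's $\langle(\Delta+1)(u^{n_j+1}-u^{n_j}),(\Delta+1)\phi\rangle$ rewritten via $u^{n+1}-u^n=\tau_{n+1}v^{n+1}$.
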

\begin{proof}
See Appendix~\ref{proof-thm:CSH-convergence}.
\end{proof}

\section{Numerical experiments}\label{sect:numerics}
In this section, we present numerical experiments for the GL and CSH models. All simulations use Fourier pseudo-spectral discretizations with periodic boundary conditions. Unless otherwise specified, we use $N=128$ grid points in each spatial direction and all methods within each comparison use the same initial condition. For the second-order flows, we set $v^0=0$, and all relaxed SAV-based schemes use $\vartheta=0.99$. For the adaptive schemes, we set $\tau_{\max}=200$; this upper bound is inactive in all reported runs. All computations were implemented in Python 3.9 and performed on a computer equipped with a 12th Gen Intel Core i7-1260P processor. The stopping criteria and parameters specific to individual experiments are given when needed. For a discrete state $u^n$, we define the stationary residual by
\[
R_u^n:=
\left\|
\mathcal G\frac{\delta E}{\delta u}(u^n)
\right\|_{L^\infty}.
\]
For the second-order flows, we additionally use $R_v^n:=\|v^n\|_{L^\infty}$ as the velocity residual.

\subsection{Ginzburg--Landau model}\label{subsec:GL}

Throughout this subsection, we consider the GL energy \eqref{eq:GL-energy-convergence} on $\Omega=[0,2\pi]^2$. Unless otherwise specified, we take $\varepsilon=0.1$ and use $\eta(t)=\alpha/t$ with $\alpha=100$. Random initial data are generated independently at each grid point from the standard uniform distribution. For the GL model, the stationary residual becomes $R_u^n:=\|-\Delta u^n+\varepsilon^{-2}((u^n)^3-u^n)\|_{L^\infty}$. In all converged runs using these random initial data, the computed solution approaches a zero-energy state.

For comparison, GF-SAV-E~\cite{LiuSheZha23} and GF-SAV-F~\cite{SheXuYan18,SheXuYan19} denote the corresponding gradient-flow SAV-E and SAV-F schemes, respectively, while SF-SAV-F denotes the second-order SAV-F scheme \eqref{eq:sf-sav-f-scheme}. For these SAV-based schemes, we adopt relaxation procedures following \cite{JiaZhaZha22,LiuSheZha23},
with the relaxation parameters selected according to the corresponding discrete energy dissipation laws.

\begin{example}[Effect of relaxation]\label{eg:4-1}
We first illustrate the role of the relaxation step in the SF-SAV-E scheme. For the GL energy \eqref{eq:GL-energy-convergence}, $E(u)\ge0$, and the constant states $u\equiv\pm1$ attain $E=0$. Thus, the zero-energy level provides a natural reference for this experiment. We compare the relaxed SF-SAV-E scheme with the same scheme without relaxation using a fixed time step $\tau=0.1$ and $C=10^{-12}$.

Figure~\ref{fig:comparison_relaxation} shows the evolution of the energy and the auxiliary variable. Without relaxation, $(r^n)^2$ decays much faster than $E(u^n)+C$ and becomes very small while the energy is still far from its minimum, causing the iteration to stagnate. With relaxation, $(r^n)^2$ remains close to $E(u^n)+C$, and $E(u^n)+C$ decreases toward $C=10^{-12}$, corresponding to $E(u^n)\to0$.
    \begin{figure}[!htbp]
        \centering
        \includegraphics[height=0.25\textheight]{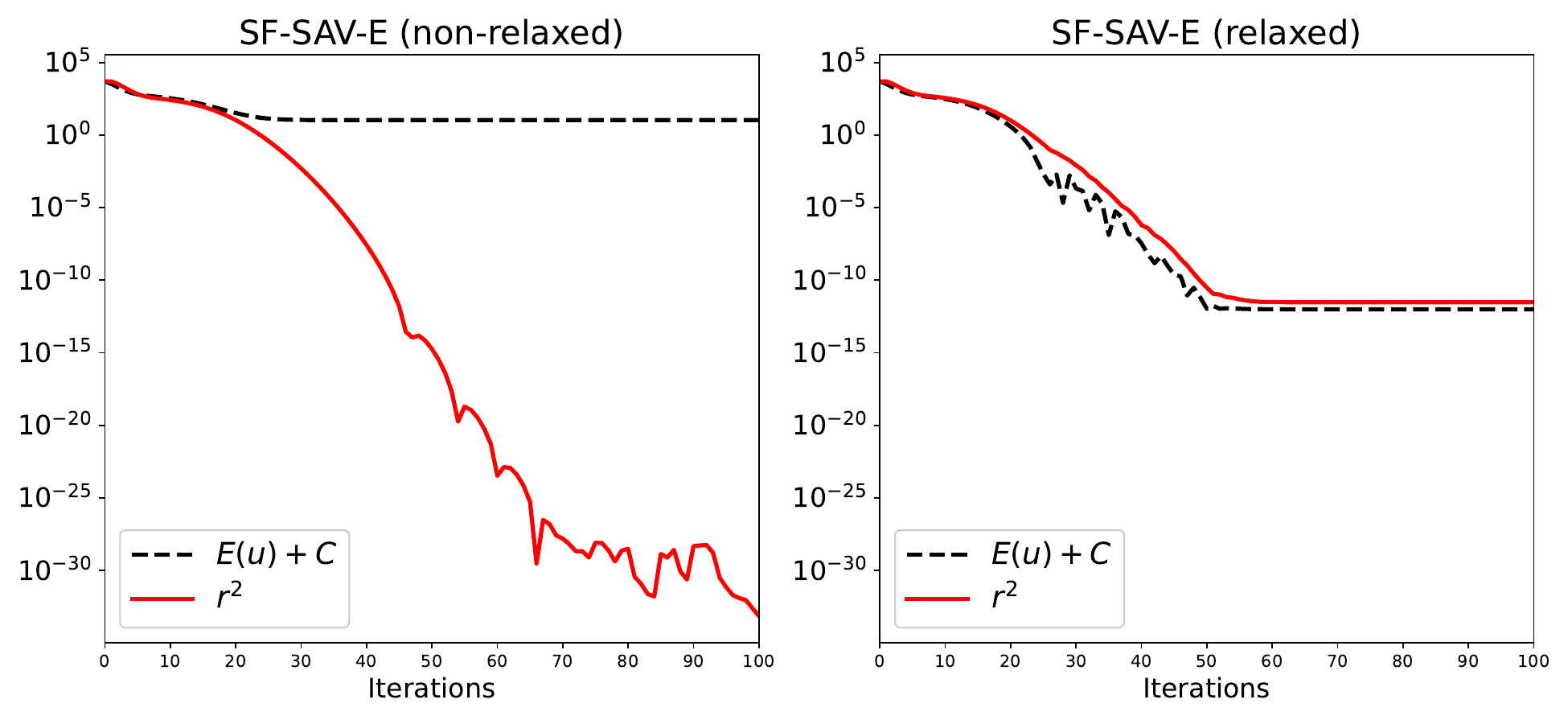}
        \caption{Comparison of the energy $E(u)+C$ and the squared auxiliary variable $r^2$ for the SF-SAV-E algorithm without (left) and with (right) the relaxation step, over 100 iterations.}
        \label{fig:comparison_relaxation}
    \end{figure}
\FloatBarrier
\end{example}

\begin{example}[Comparison of SAV-F and SAV-E]\label{eg:4-2}
This example examines the stationary-state consistency of the SAV-F and SAV-E formulations for both the gradient flow and the second-order flow. As seen from \eqref{eq:sav-f-stationary} and \eqref{eq:sav-e-stationary}, the auxiliary ratio multiplies only the nonlinear term in SAV-F, but the entire variational derivative in SAV-E. We therefore choose $u^0(x_1,x_2)=0.05\sin x_1\sin x_2$, which produces nonconstant limiting states with $f(u^\star)\not\equiv0$ in our computations, so that the effect of the SAV-F auxiliary ratio remains visible. All four schemes use the fixed time step $\tau=0.1$. Since the GL potential and energy are non-negative, we set $C_F=C_E=10^{-12}$ and initialize $r_F^0=\sqrt{\int_\Omega F(u^0)\,dx+C_F}$ for SAV-F and $r^0=\sqrt{E(u^0)+C_E}$ for SAV-E.
The iterations are stopped when $\|u^{n+1}-u^n\|_{L^\infty}<10^{-12}$. For the second-order schemes, we additionally require $\|v^{n+1}-v^n\|_{L^\infty}<10^{-10}$. Table~\ref{tab:sav_f_vs_e_final} reports the final energy, original Euler--Lagrange residual, SAV-F residual corresponding to \eqref{eq:sav-f-stationary}, and terminal auxiliary ratio.

Table~\ref{tab:sav_f_vs_e_final} illustrates the distinction discussed in Section~2.2. For both SAV-F schemes, the terminal auxiliary ratio differs from one, and the residual of the modified stationary condition \eqref{eq:sav-f-stationary} is very small, while the residual of the original Euler--Lagrange equation remains non-negligible. In contrast, both SAV-E schemes attain small Euler--Lagrange residuals. In particular, GF-SAV-E reaches an Euler--Lagrange residual $\left\|{\delta E}/{\delta u}\right\|_{L^\infty}<10^{-9}$ even with a terminal auxiliary ratio of about $0.11$, showing that SAV-E does not require this ratio to be close to one.
\begin{table}[!htbp]
\centering
\caption{Final energy, stationary residuals, and terminal auxiliary ratio for the compared schemes. The SAV-F residual is the $L^\infty$-norm of the left-hand side of \eqref{eq:sav-f-stationary}. The terminal auxiliary ratio is $\tilde r_F^{n+1}/\sqrt{\int_\Omega F(u^n)\,dx+C_F}$ for SAV-F and $\tilde r^{n+1}/\sqrt{E(u^n)+C_E}$ for SAV-E.}
\label{tab:sav_f_vs_e_final}
\begin{tabular}{lcccc}
\hline
Algorithm
& Energy
& $\left\|{\delta E}/{\delta u}\right\|_{L^\infty}$
& SAV-F residual
& Auxiliary ratio \\
\hline
GF-SAV-F
& 262.04
& $33.35$
& $1.66\times10^{-11}$
& 0.13 \\
SF-SAV-F
& 167.77
& $4.08$
& $1.15\times10^{-10}$
& 1.11 \\
GF-SAV-E
& 167.55
& $9.57\times10^{-10}$
& --
& 0.11 \\
SF-SAV-E
& 167.55
& $1.04\times10^{-10}$
& --
& 1.00 \\
\hline
\end{tabular}
\end{table}
\FloatBarrier
\end{example}

\begin{example}[Numerical evidence for Hypothesis~\ref{hyp:ratio-control}]\label{eg:hypothesis-ratio-evidence}\label{ex:adaptive-ratio-gl}
To examine Hypothesis~\ref{hyp:ratio-control} numerically, we use an adaptive setting of SF-SAV-E with $\tau_0=\tau_{\min}=0.01$, $\rho=1.01$, and vary $I_{\rm thr}\in\{0.6,0.7,0.75,0.8,0.9,0.95\}$. Each run is continued for 3000 updates. Figure~\ref{fig:example53-adaptive-a} shows the evolution of $a^n$ for $I_{\rm thr}=0.6,0.75,0.9$, while Figure~\ref{fig:example53-adaptive-b} reports its minimum and maximum over the late-time window $n=2000,\ldots,2999$ for all tested values of $I_{\rm thr}$. For every tested value of $I_{\rm thr}$, these late-time bounds remain strictly inside $(0,2)$, providing numerical evidence consistent with Hypothesis~\ref{hyp:ratio-control}. Moreover, the late-time values of \(a^n\) are close to the corresponding \(I_{\rm thr}\), suggesting a stable tracking behavior that helps explain why \(a^n\) remains bounded away from \(0\) and \(2\).  

\begin{figure}[H]
\centering
\subfloat[Evolution of $a^n$ for $I_{\rm thr}=0.6,0.75,0.9$.\label{fig:example53-adaptive-a}]{%
    \includegraphics[height=0.21\textheight]{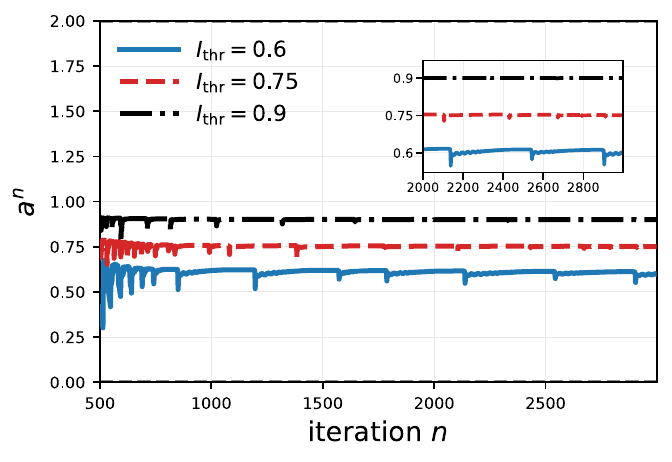}}%
\hfill
\subfloat[Windowed bounds of $a^n$ versus $I_{\rm thr}$.\label{fig:example53-adaptive-b}]{%
    \includegraphics[height=0.21\textheight]{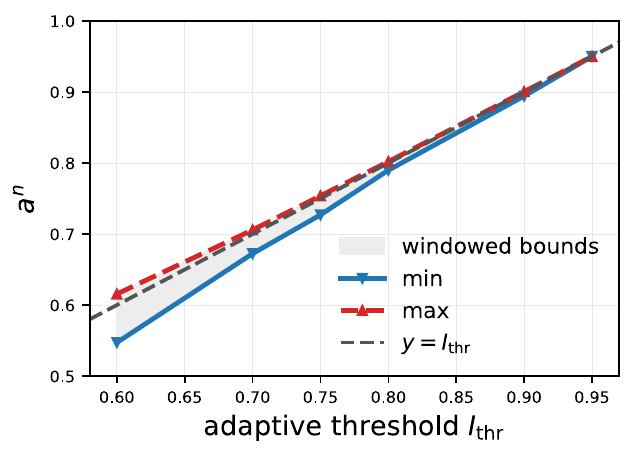}}%
\caption{Numerical evidence for Hypothesis~\ref{hyp:ratio-control}. Panel (a) shows the evolution of $a^n$ for $I_{\rm thr}=0.6,0.75,0.9$. Panel (b) shows the minimum and maximum of $a^n$ over $n=2000,\ldots,2999$ for all tested values of $I_{\rm thr}$.}
\label{fig:example53-adaptive}
\end{figure}

\end{example}

\begin{example}[Comparison with other second-order flow schemes]\label{eg:4-3}
We compare fixed and adaptive SF-SAV-E with the stabilized semi-implicit
scheme SF-SSI~\cite{CheDonLiuXie23} and the convex-splitting scheme
SF-CS~\cite{CheDonIglLiuXie25}. We take $N=256$ and $\varepsilon=0.1$. For all second-order flow schemes, the damping is $\eta(t)=\alpha/t$, with $\alpha\in\{500,1000,5000\}$. For the fixed-step schemes, we consider
$\tau\in\{0.01,0.1,1,5\}$, while the same four values are used as
the initial step sizes $\tau_0$ for adaptive SF-SAV-E. For the adaptive scheme, we take $\tau_{\min}=0.01$, $\rho=1.01$, and $I_{\rm thr}=0.75$. The same random initial field is used for all runs. All runs are terminated when $R_u^n<10^{-8}$ and $R_v^n<10^{-8}$, with at most 10000 updates.

Table~\ref{tab:save_adaptive_counts} shows that, for fixed SF-SAV-E, the number of iterations depends strongly on the choices of $\tau$ and $\alpha$. A similar sensitivity to the damping parameter was observed for SF-SSI in~\cite{CheDonLiuXie23}. Over the tested parameter combinations, fixed SF-SAV-E requires $43$--$4489$ iterations, whereas adaptive SF-SAV-E requires $41$--$372$. Although a well-chosen fixed step can still be more efficient in an individual case, adaptive time stepping dramatically reduces the sensitivity to the damping parameter and the initial time step.

Table~\ref{tab:second_order_times} reports the computational times required to reach the convergence criterion. SF-SSI does not reach the prescribed tolerance in several fixed-step cases, while SF-CS requires a nonlinear Newton--CG solve at each step. Adaptive SF-SAV-E retains a linear and decoupled update. It is faster than SF-CS in all tested cases and faster than SF-SSI whenever SF-SSI converges.

\begin{table}[!htbp]
\centering
\caption{Iterations to reach the convergence criterion for fixed and adaptive
SF-SAV-E with $\eta(t)=\alpha/t$.}
\label{tab:save_adaptive_counts}

\small
\begin{tabular*}{0.75\textwidth}{
@{\extracolsep{\fill}}
c
*{8}{c}
@{}
}
\toprule
&
\multicolumn{4}{c}{\normalsize Fixed SF-SAV-E ($\tau$)}
&
\multicolumn{4}{c}{\normalsize Adaptive SF-SAV-E ($\tau_0$)}
\\
\cmidrule(lr){2-5}\cmidrule(lr){6-9}
$\alpha$
& 0.01 & 0.1 & 1 & 5
& 0.01 & 0.1 & 1 & 5
\\
\midrule
500
& 766 & 97 & 222 & 4489
& 233 & 82 & 309 & 200
\\
1000
& 1109 & 128 & 65 & 2617
& 265 & 95 & 372 & 270
\\
5000
& 2502 & 273 & 43 & 617
& 340 & 140 & 41 & 173
\\
\bottomrule
\end{tabular*}
\end{table}

\begin{table}[!htbp]
\centering
\caption{CPU times (s) to reach the convergence criterion for the second-order flow schemes with $\eta(t)=\alpha/t$. ``Not converged'' indicates that the stopping criterion was not reached within 10000 updates.}
\label{tab:second_order_times}
\small
\setlength{\tabcolsep}{3pt}
\begin{tabular}{ccrrrr}
\toprule
$\alpha$ & $\tau/\tau_0$ & SF-SSI & SF-CS & \shortstack{SF-SAV-E\\(fixed)} & \shortstack{SF-SAV-E\\(adaptive)} \\
\midrule
\multirow{4}{*}{500} & 0.01 & 15.01 & 67.72 & 17.03 & 5.49 \\
 & 0.1 & Not converged & 47.61 & 2.07 & 2.14 \\
 & 1 & Not converged & 28.15 & 4.79 & 8.45 \\
 & 5 & Not converged & 26.87 & 105.21 & 4.50 \\
\midrule
\multirow{4}{*}{1000} & 0.01 & 21.53 & 87.87 & 27.06 & 6.63 \\
 & 0.1 & 63.10 & 49.27 & 2.96 & 2.46 \\
 & 1 & Not converged & 32.32 & 1.66 & 9.17 \\
 & 5 & Not converged & 27.12 & 59.83 & 6.41 \\
\midrule
\multirow{4}{*}{5000} & 0.01 & 48.26 & 190.23 & 55.57 & 8.18 \\
 & 0.1 & 4.77 & 71.47 & 6.00 & 3.14 \\
 & 1 & Not converged & 31.12 & 0.97 & 0.95 \\
 & 5 & Not converged & 25.33 & 13.27 & 3.78 \\
\bottomrule
\end{tabular}
\end{table}

\FloatBarrier
\end{example}

\begin{example}[Comparison with gradient flow]\label{eg:4-4}
We finally compare SF-SAV-E with its gradient-flow counterpart, GF-SAV-E, to examine the effect of the second-order dynamics. We use the smooth deterministic sign-changing initial condition
\[
u^0(x_1,x_2)
=
0.1+0.25(\sin x_1+\sin x_2)
+0.1\sin(x_1+x_2).
\]
We fix $N=512$ and take $\varepsilon=1/k$ for $k=10,\ldots,30$. Both methods use the same adaptive time-stepping strategy with
$\tau_0=\tau_{\min}=0.01$, $\rho=1.01$, and $I_{\rm thr}=0.75$. Convergence is declared when \(R_u^n<10^{-8}\), and for SF-SAV-E we additionally require \(R_v^n<10^{-8}\). For every tested $\varepsilon$, both methods converge to the same positive homogeneous zero-energy state.
At the fixed spatial resolution $N=512$, Figure~\ref{fig:acceleration} shows that the two methods require comparable numbers of iterations for relatively large $\varepsilon$. As $\varepsilon$ decreases, SF-SAV-E shows an increasingly pronounced advantage in the number of iterations required for convergence. At $1/\varepsilon=30$, this advantage is substantial: SF-SAV-E requires 1748 iterations, compared with 8212 for GF-SAV-E.
\begin{figure}[!htbp]
    \centering
    \includegraphics[width=0.68\linewidth]{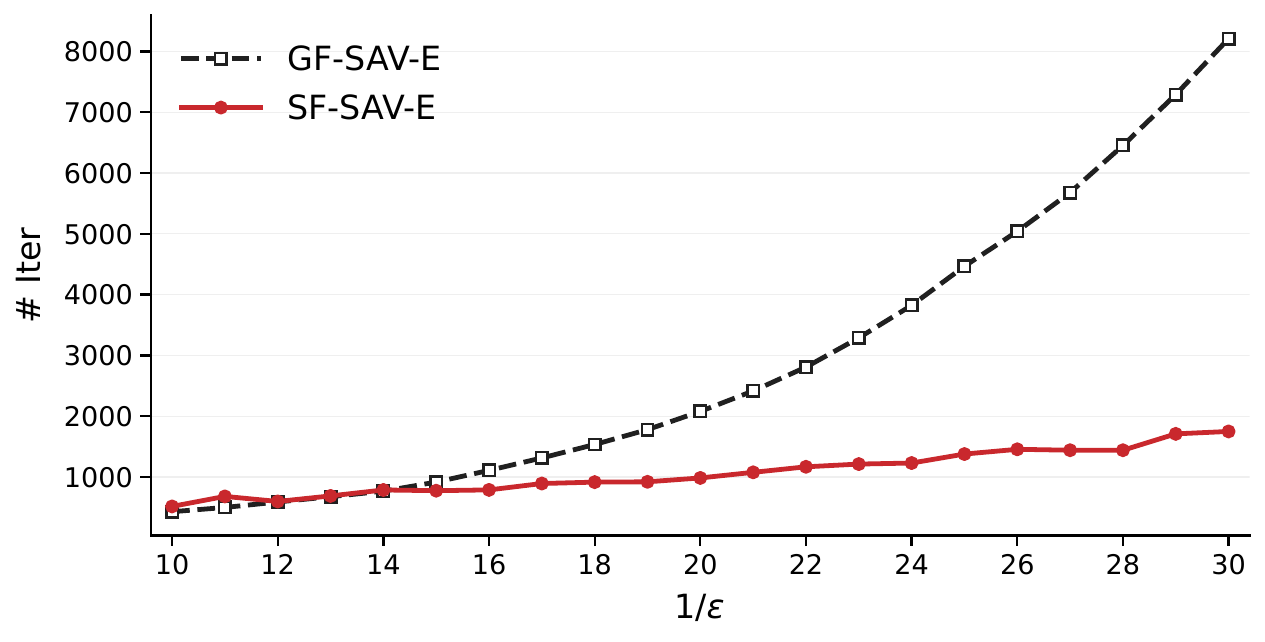}
    \caption{Number of iterations to convergence versus $1/\varepsilon$ for the GL model.}
    \label{fig:acceleration}
\end{figure}
\FloatBarrier
\end{example}

\subsection{Numerical results for the CSH model}\label{subsec:CSH}
 We now consider numerical experiments for the CSH model introduced in Section~\ref{subsec:CSH-convergence}. We take the energy \eqref{eq:SH-energy-convergence} on $\Omega=[0,50]^2$, with $\mathcal G=-\Delta$ for both gradient and second-order flows. The parameter \(\sigma>0\) serves as a reduced control parameter in the CSH/PFC model, which is often related to undercooling or to the distance from the instability threshold. As \(\sigma\) increases, the energy landscape typically becomes more strongly nonconvex, with enhanced metastability and more pronounced trapping by defect-containing states.

\begin{example}[Computational efficiency and converged states]\label{eg:4-5}
We compare SF-SAV-E with GF-SAV-E on the CSH model, employing the same adaptive time-stepping strategy for both methods with $\tau_{\min}=1$, $\rho=1.1$, $I_{\rm thr}=0.75$, and initial $\tau_0=1$. For SF-SAV-E, we take $\eta(t)=\alpha/t$ and test $\alpha\in\{50,100,200,300\}$ for each value of $\sigma$. Since the damping parameter influences the evolution of the second-order flow and may affect the converged state, we report the lowest-energy result among these runs. The corresponding value of $\alpha$ is listed in Table~\ref{tab:lb_results}. The reported iteration counts and CPU times correspond to the selected run. 

Both methods use the same random initial condition $u_{i,j}^0=0.01(2R_{i,j}-1)$, where the $R_{ij}$ are independent standard uniform random variables. The resulting field is then projected to zero mean. We take $C=100$. Since $\mathcal G=-\Delta$,
the stationary residual takes the form
\[
R_u^n=
\left\|
\Delta\frac{\delta E}{\delta u}(u^n)
\right\|_{L^\infty}.
\]
All runs are terminated when $R_u^n<10^{-8}$, and for SF-SAV-E
we additionally require $R_v^n<10^{-8}$.
\begin{table}[!ht]
    \centering
    \caption{Iterations to convergence, final energies, and computational times for GF-SAV-E and SF-SAV-E on the CSH model.}
    \label{tab:lb_results}
    \begin{tabular}{c|c|c|c|c|c}
        \hline
        \(\sigma\) & Algorithm & \(\eta(t)\) & Iterations & Energy & Time (s) \\
        \hline
        \multirow{2}{*}{0.1}
        & GF-SAV-E & --      & 21518 & -4.1586          & 356.13 \\
        & SF-SAV-E & \(300/t\) & 2152  & -4.1586          & 53.74 \\
        \hline
        \multirow{2}{*}{0.2}
        & GF-SAV-E & --      & 12053 & -16.4646         & 121.32 \\
        & SF-SAV-E & \(200/t\) & 1847  & \textbf{-16.6590} & 47.53 \\
        \hline
        \multirow{2}{*}{0.3}
        & GF-SAV-E & --      & 21746 & -37.5097         & 361.79 \\
        & SF-SAV-E & \(200/t\) & 1677  & -37.5097         & 42.75 \\
        \hline
        \multirow{2}{*}{0.4}
        & GF-SAV-E & --      & 23902 & -62.5890         & 252.31 \\
        & SF-SAV-E & \(50/t\)  & 5893  & \textbf{-66.7189} & 150.18 \\
        \hline
    \end{tabular}
\end{table}

Table~\ref{tab:lb_results} summarizes the results. For $\sigma=0.1$ and $0.3$, the two methods converge to states with essentially the same energy, while SF-SAV-E requires substantially fewer iterations and less computational time. For $\sigma=0.2$ and $0.4$, SF-SAV-E reaches lower-energy states than GF-SAV-E. 

Figure~\ref{fig:lb_patterns} shows the corresponding final states for all four values of $\sigma$. For $\sigma=0.1$, the two methods converge to stripe states with essentially the same energy. For $\sigma=0.2$, both methods produce stripe-like states, although the GF-SAV-E pattern is less geometrically regular. For $\sigma=0.3$, both methods produce regular stripe states. The most pronounced difference occurs for $\sigma=0.4$, where GF-SAV-E remains in a defect-containing stripe configuration, whereas SF-SAV-E reaches a regular parallel-stripe state with lower energy.

\begin{figure}[t!]
    \centering
    \includegraphics[width=\linewidth]{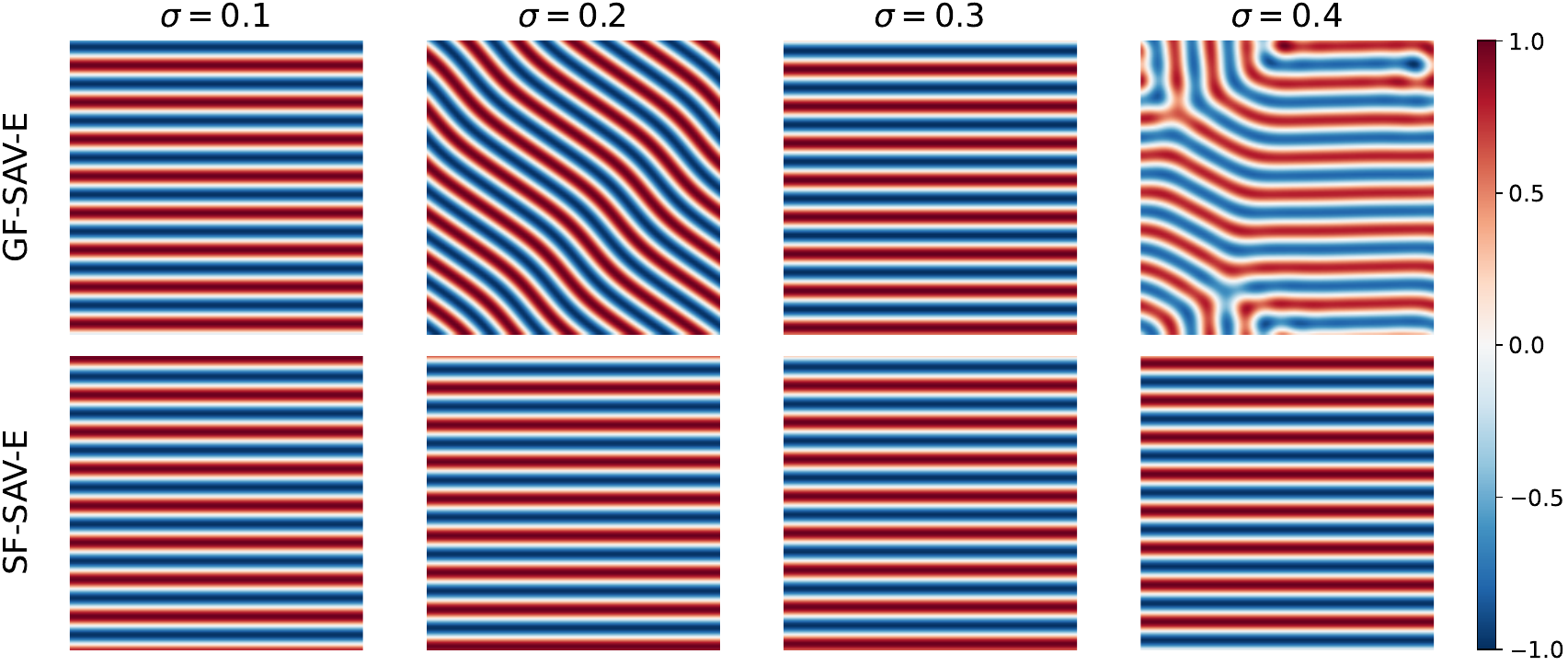}
    \caption{Final states obtained by GF-SAV-E (top row) and SF-SAV-E (bottom row); the columns correspond to $\sigma=0.1$, $0.2$, $0.3$, and $0.4$. Each field is normalized by its $L^\infty$-norm for visualization.}
    \label{fig:lb_patterns}
\end{figure}

\end{example}

\FloatBarrier
\section{Conclusion}\label{sect:conclusion}

{\looseness=-1
In this paper, we developed a robust and efficient SAV framework for approaching stationary points with second-order flows. We derived a linear and decoupled implementation of the relaxed scheme and established its unconditional modified-energy stability. Under suitable model assumptions and a ratio-control hypothesis, we further established subsequential convergence to stationary points for the Ginzburg--Landau and conserved Swift--Hohenberg models. Numerical experiments illustrate the roles of relaxation and adaptive time stepping, demonstrate the distinction between SAV-F and SAV-E in stationary-state consistency, and show clear computational advantages of SF-SAV-E over previous second-order flow schemes and the corresponding gradient flow in the tested problems.\par}

Future work includes extensions to other variational models and constrained problems, and stronger convergence theory.

\section*{Acknowledgments}
Generative AI tools were used to assist with language editing and preliminary checks of manuscript consistency, mathematical arguments, and references. The authors assume responsibility for all content.

\appendix

\section{Proof of Lemma~\ref{lem:CSH-boundedness}}\label{proof-lem:CSH-boundedness} 
We first verify the regularity of the discrete iterates.
For the CSH model, $\mathcal A_{\tau,\eta}=(1+\tau\eta)I-\tau^2\Delta(\Delta+1)^2$ is a sixth-order elliptic operator under periodic boundary conditions. Standard elliptic regularity together with the decoupled formulation in
Section~\ref{subsec:relaxed-decoupled} therefore yield inductively $u^n\in H^6_{\rm per}(\Omega)$ for every $n\ge0$. The first equation of \eqref{eq:relaxed-sav-scheme} then gives $v^n\in H^6_{\rm per}(\Omega)$ for every $n\ge1$.

For part \textnormal{(i)}, recall that $\mathcal G=-\Delta$ and $\mathcal S=-\Delta(\Delta+1)^2$ for the CSH model.
Therefore, by periodicity,
\begin{align}\label{eq:prop-CSH-basic-intSGL}
\int_\Omega \mathcal S u^{n+1}\,d\mathbf{x}
=
\int_\Omega \mathcal S u^n\,d\mathbf{x}
=
\int_\Omega
\mathcal G\bigl(\mathcal L u^n+f(u^n)\bigr)\,d\mathbf{x}
=0,
\end{align}
since each term is the integral of a Laplacian over the periodic domain. Integrating the first two equations of \eqref{eq:relaxed-sav-scheme}
over $\Omega$ and applying \eqref{eq:prop-CSH-basic-intSGL}, one gets
\[
\begin{aligned}
&\int_\Omega u^{n+1}\,d\bx
=
\int_\Omega u^n\,d\bx
+
\tau_{n+1}\int_\Omega v^{n+1}\,d\bx.\\
&\frac{1}{\tau_{n+1}}
\int_\Omega (v^{n+1}-v^n)\,d\bx
+
\eta^{n+1}\int_\Omega v^{n+1}\,d\bx
=0.
\end{aligned}
\]
Since \(\int_\Omega v^0\,d\bx=0\), the second identity implies \(\int_\Omega v^n\,d\bx=0\) for all \(n\in\mathbb N\). The first identity, together with \(\int_\Omega u^0\,d\bx=0\), then gives \(\int_\Omega u^n\,d\bx=0\) for all \(n\in\mathbb N\).

For part \textnormal{(ii)}, since
\(\langle \mathcal S v,\mathcal G^{-1}v\rangle=\|(\Delta+1)v\|_{L^2}^2\), 
the dissipation estimate \eqref{eq:discrete_energy_law-relax} implies
\[
(1-\vartheta)\tau_{n+1}^2
\|(\Delta+1)v^{n+1}\|_{L^2}^2
\le
\bar{\mathcal E}^n-\bar{\mathcal E}^{n+1}.
\]
Summing from \(n=0\) to \(k-1\) and using \(\tau_{n+1}\ge\tau_{\min}\), we obtain
\[
(1-\vartheta)\tau_{\min}^2\sum_{n=0}^{k-1}
\|(\Delta+1)v^{n+1}\|_{L^2}^2
\le
\bar{\mathcal E}^{0}.
\]
By part~\textnormal{(i)} and
Proposition~\ref{prop:CSH-basic-estimates}\textnormal{(i)},
\(\sum_{n=1}^{\infty}\|v^n\|_{H^2}^2<\infty.\) 
In particular, \(v^n\to0\) strongly in \(H^2(\Omega)\), and
\(\{v^n\}_{n\in\mathbb N}\) is uniformly bounded in
\(\dot L^2(\Omega)\). This proves part~\textnormal{(ii)}.

For part~\textnormal{(iii)}, Lemma~\ref{lem:universal-physical-energy-bound}
yields a uniform upper bound for \(E(u^n)\).
By part~\textnormal{(i)} and the energy coercivity in
Proposition~\ref{prop:CSH-basic-estimates}\textnormal{(ii)},
\(\{u^n\}\) is uniformly bounded in \(\dot H^2(\Omega)\).
Proposition~\ref{prop:CSH-basic-estimates}\textnormal{(iii)}
therefore implies that \(\{f(u^n)\}\) is uniformly bounded in
\(L^2(\Omega)\).

Applying \((-\Delta)^{-1}\) to the second equation of the scheme \eqref{eq:relaxed-sav-scheme} gives
\[
\mathcal L u^{n+1}
=
(1-a^n)\mathcal L u^n
-a^n P_0f(u^n)
-
(-\Delta)^{-1}\!\left(
\frac{v^{n+1}-v^n}{\tau_{n+1}}+\eta^{n+1}v^{n+1}
\right),
\]
where \(a^n\) is defined in Hypothesis~\ref{hyp:ratio-control}. Since \(P_0\) is an \(L^2\)-orthogonal projection, we have $\|P_0f(u^n)\|_{L^2}\le \|f(u^n)\|_{L^2}$. Using also $|1-a^n|\le1-\delta$, $|a^n|\le2-\delta$,  $\tau_{n+1}\geq\tau_{\min}$, $\eta^{n+1}\leq\eta_{\max}$, together with part~\textnormal{(ii)} and the boundedness of
\((-\Delta)^{-1}\) on the zero-mean space, we obtain
\[
\|\mathcal L u^{n+1}\|_{L^2}
\le
(1-\delta)\|\mathcal L u^n\|_{L^2}+M
\qquad \text{for all } n\ge N
\]
for some constant \(M>0\). A standard induction then yields $\sup_{n\in\mathbb N}\|\mathcal L u^n\|_{L^2}<\infty$. Proposition~\ref{prop:CSH-basic-estimates}\textnormal{(i)} therefore implies
\(\sup_{n\in\mathbb N}\|u^n\|_{H^4}<\infty.\) 
Together with the zero-mean property in part~\textnormal{(i)}, this
shows that \(\{u^n\}\) is uniformly bounded in
\(\dot H^4(\Omega)\).
This completes the proof.

\section{Proof of Theorem~\ref{thm:CSH-convergence}}\label{proof-thm:CSH-convergence}
Lemma~\ref{lem:CSH-boundedness}\textnormal{(iii)} yields a uniform $H^4(\Omega)$ bound on $\{u^n\}$, while Lemma~\ref{lem:CSH-boundedness}\textnormal{(ii)} gives $v^n\to0$ strongly in $H^2(\Omega)$. By the Banach--Alaoglu theorem and the compact embedding $H^4(\Omega)\hookrightarrow H^3(\Omega)$, after passing to a subsequence $\{n_j\}$,
\[
u^{n_j}\rightharpoonup u^\ast
\quad\text{weakly in }H^4(\Omega),
\qquad
u^{n_j}\to u^\ast
\quad\text{strongly in }H^3(\Omega).
\]
By Lemma~\ref{lem:CSH-boundedness}\textnormal{(i)}, the zero-mean property is preserved for $\{u^{n_j}\}$, so the weak limit $u^\ast\in\dot H^4(\Omega)$. Since $u^{n_j+1}-u^{n_j}=\tau_{n_j+1}v^{n_j+1}$ and $\tau_{n_j+1}\le\tau_{\max}$, we also have $u^{n_j+1}\to u^\ast$ strongly in $H^2(\Omega)$.

For $j$ sufficiently large, Hypothesis~\ref{hyp:ratio-control} gives $a^{n_j}\ge\delta$. Applying $(-\Delta)^{-1}$ to the second equation of \eqref{eq:relaxed-sav-scheme}, testing with $\phi\in\dot H^2(\Omega)$, and using $\langle P_0f(u^{n_j}),\phi\rangle=\langle f(u^{n_j}),\phi\rangle$, we obtain
\[
\begin{aligned}
&
\langle(\Delta+1)u^{n_j},(\Delta+1)\phi\rangle
+
\langle f(u^{n_j}),\phi\rangle\\
&\qquad\qquad =
-\frac{1}{a^{n_j}}
\left\langle
(\Delta+1)(u^{n_j+1}-u^{n_j}),
(\Delta+1)\phi
\right\rangle\\
&\qquad\qquad\quad\,
-\frac{1}{a^{n_j}}
\left\langle
(-\Delta)^{-1}
\left(
\frac{v^{n_j+1}-v^{n_j}}{\tau_{n_j+1}}
+\eta^{n_j+1}v^{n_j+1}
\right),
\phi
\right\rangle .
\end{aligned}
\]
Since $a^{n_j}\ge\delta$, $\tau_{n_j+1}\geq\tau_{\min}$, $\eta^{n_j+1}\leq\eta_{\max}$, the strong limits $u^{n_j+1}-u^{n_j}\to0$ in $H^2(\Omega)$ and $v^{n_j},v^{n_j+1}\to0$ in $H^2(\Omega)$ show that the right-hand side tends to zero. The boundedness of $(-\Delta)^{-1}$ on the zero-mean space is used in the second term. The strong convergence $u^{n_j}\to u^\ast$ in $H^3$ also implies $f(u^{n_j})\to f(u^\ast)$ strongly in $L^2(\Omega)$. Passing to the limit gives
\[
\langle(\Delta+1)u^\ast,(\Delta+1)\phi\rangle
+\left\langle (u^\ast)^3-\sigma u^\ast,\phi\right\rangle=0,
\qquad \forall\,\phi\in\dot H^2(\Omega).
\]
Thus $u^\ast$ is a stationary point of the CSH energy under the mass constraint. This completes the proof.

\bibliographystyle{siamplain}
\bibliography{references}
\end{document}